\theoremstyle{plain}
\newtheorem{thm}{Theorem}[section]
\newtheorem{cor}[thm]{Corollary}
\newtheorem{lem}[thm]{Lemma}
\theoremstyle{definition}
\newtheorem{defn}[thm]{Definition}
\newtheorem{note}[thm]{Note}
\theoremstyle{remark}
\setlist[enumerate,1]{leftmargin=2 em}
\def\N{\mathbb N} 
\def\F{\mathbb F}
\def\Z{\mathbb Z}
\def\Ce{\mathfrak{C}}
\def\sl{\mathfrak{sl}_2}
\def\U{U(\mathfrak{sl}_2)}
\def\FU{\F[a,b,c]\otimes \U}
\def\comm{1\otimes \Lambda+a(a+1)\otimes 1+b(b+1)\otimes 1+c(c+1)\otimes 1}
\title[The universal enveloping algebra of $\mathfrak{sl}_2$ and the Racah algebra]{The universal enveloping algebra of $\mathfrak{sl}_2$ and\\ the Racah algebra}
\author{Sarah Bockting-Conrad}
\address{
Sarah Bockting-Conrad\\
Department of Mathematical Sciences\\
DePaul University\\
Chicago, Illinois, USA}
\email{sarah.bockting@depaul.edu}
\author{Hau-Wen Huang}
\address{
Hau-Wen Huang\\
Department of Mathematics\\
National Central University\\
Chung-Li 32001 Taiwan
}
\email{hauwenh@math.ncu.edu.tw}
\thanks{The research of the second author is supported by the Ministry of Science and Technology of Taiwan under the project MOST 106-2628-M-008-001-MY4}
\begin{document}

\begin{abstract}
Let $\F$ denote a field with ${\rm char\,}\F\not=2$. The Racah algebra $\Re$ is the unital associative $\F$-algebra defined by generators and relations in the following way.  The generators are $A$, $B$, $C$, $D$. The relations assert that 
\begin{equation*}
[A,B]=[B,C]=[C,A]=2D
\end{equation*} 
and each of the elements
\begin{gather*}
\alpha=[A,D]+AC-BA,
\qquad
\beta=[B,D]+BA-CB,
\qquad
\gamma=[C,D]+CB-AC
\end{gather*}
is central in $\Re$. Additionally the element $\delta=A+B+C$ is central in $\Re$. 
 In this paper we explore the relationship between the Racah algebra $\Re$ and the universal enveloping algebra $\U$. Let $a,b,c$ denote mutually commuting indeterminates.  We show that there exists a unique $\F$-algebra homomorphism $\natural:\Re\to\F[a,b,c]\otimes_\F \U$ that sends
\begin{eqnarray*}
A  &\mapsto&
a(a+1)\otimes 1+(b-c-a)\otimes x+(a+b-c+1)\otimes y-1\otimes xy,
\\
B  &\mapsto&
b(b+1)\otimes  1+(c-a-b)\otimes y+(b+c-a+1)\otimes z-1\otimes yz,
\\
C  &\mapsto&
c(c+1)\otimes  1+(a-b-c)\otimes z+(c+a-b+1)\otimes x-1\otimes zx,
\\
D  &\mapsto&
1\otimes  (zyx+zx)+
(c+b(c+a-b))\otimes x
+(a+c(a+b-c))\otimes y
\\
&&
\qquad+(b+a(b+c-a))\otimes z +\,(b-c)\otimes xy+(c-a)\otimes yz+(a-b)\otimes zx,
\end{eqnarray*}
where $x,y,z$ are the equitable generators for $\U$.  
We additionally give the images of $\alpha,\beta,\gamma,\delta,$ and certain Casimir elements of $\Re$ under $\natural$.  
We also show that the map $\natural$ is an injection and thus provides an embedding of $\Re$ into $\F[a,b,c]\otimes\U$.
We use the injection to show that $\Re$ contains no zero divisors. 

\bigskip
\noindent
{\bf Keywords:} Racah algebra, quadratic algebras, Lie algebras, universal enveloping algebras, Casimir elements.
 \hfil\break
\noindent {\bf 2010 Mathematics Subject Classification}. 
Primary: 81R10.  Secondary: 16S37. 
\end{abstract}

\maketitle
 
\section{Introduction}  
	In this paper, we consider a universal analogue of the classical Racah algebras which appear in a variety of contexts. 
		The Racah algebras were first discovered in the study of the coupling problem for three angular momenta \cite{Levy1965}, though the algebras were not referred to as the Racah algebras until much later.  
	Approximately twenty years later, Granovski{\u\i} and Zhedanov rediscovered the Racah algebras in an alternate presentation, now referred to as the standard presentation,
and gave a realization for the Racah algebras in terms of the intermediate Casimir operators of the Lie algebra $\mathfrak{su}(2)$  \cite{zhedanov1988}.  
	The algebras were again rediscovered in \cite{LTRacah}, but this time in the context of Leonard triples of Racah type.    
	In addition to the works cited above, connections have been found between the Racah algebras and a host of other areas including the Askey scheme of classical orthogonal polynomials, superintegrable models, and the Racah problem for the Lie algebra $\mathfrak{su}(1,1)$ \cite{Galbert, integrable2014-1,integrable2014-2, Racahproblem2014, R&BI2015,Kalnins1,Kalnins2, Kalnins3,Kalnins4,zhedanov1988, quadratic1991,quadratic1992, quadratic1989, hiddensymmetry1,hiddensymmetry2,hiddensymmetry3}.

For the rest of the present paper, we let $\F$ denote a field with ${\rm char\,}\F\not=2$.  Let $\Z$ denote the set of all integers and let $\N$ denote the set of all nonnegative integers. The unadorned tensor products are meant to be over $\F$.
 When we discuss an algebra, we mean a unital associative algebra. When we discuss a subalgebra, we assume that it has the same unit 
 as the parent algebra. 
 
 We let $\Re$ denote the  
  $\F$-algebra defined by generators and relations in the following way.  
  The generators are $A$, $B$, $C$, $D$.  
The relations assert that 
\begin{equation*}
[A,B]=[B,C]=[C,A]=2D
\end{equation*}
and that each of the elements
\begin{equation*}
\alpha=[A,D]+AC-BA,
\qquad
\beta=[B,D]+BA-CB, 
\qquad
\gamma=[C,D]+CB-AC
\end{equation*}
is central in $\Re$.
It follows from the above definition that the element $\delta=A+B+C$ is also central in $\Re$. The algebra $\Re$ is a universal analogue of the original Racah algebras and is referred to as the {\it Racah algebra} hereafter \cite{Levy1965, R&BI2015}.	

We now mention some earlier work concerning this algebra. 
In \cite{Huang:R<BI,Huang:racah-DAHA}, the second author explores the relationship between $\Re$ and the additive double affine Hecke algebra $\mathfrak{H}$ of type $(C_1^\vee,C_1)$.  The author shows that $\Re$ is isomorphic to a subalgebra of $\mathfrak H$ and leverages this fact along with the information in \cite{SH:2019-1, HHL:BImodules} to classify the lattices of $\Re$-submodules of finite-dimensional irreducible $\mathfrak{H}$-modules, provided that $\F$ is algebraically closed with characteristic zero.

	In \cite{racah1}, the present authors discuss the algebra $\Re$ and investigate its Casimir class which we now define.  Let $\Ce$ denote the commutative subalgebra of $\Re$ generated by $\alpha$, $\beta$, $\gamma$, $\delta$.  
Inspired by the work by Genest--Vinet--Zhedanov in \cite[Section 2]{integrable2014-2}, we consider the following coset of $\Ce$ in $\Re$:
\begin{equation*}
D^2+A^2+B^2
+\frac{(\delta+2)\{A,B\}-\{A^2,B\}-\{A,B^2\}}{2}
+A (\beta-\delta)
+B (\delta-\alpha)+\Ce,
\label{eq:abs}
\end{equation*}
where $\{\,,\,\}$ stands for the anticommutator.  
We refer to this coset as the \emph{Casimir class} of $\Re$ and call an element of the Casimir class a \emph{Casimir element} of $\Re$.  
In \cite{racah1}, we show that every Casimir element $\Omega$ of $\Re$ is central in $\Re$.  Further, we show that $\Omega$ is algebraically independent over $\Ce$ and  that whenever ${\rm char\,}\F=0$, the center of $\Re$ is precisely $\Ce[\Omega]$.  
Some of the results in \cite{racah1} focus on a set of Casimir elements $\{\Omega_A,\Omega_B,\Omega_C\}$ which is invariant under a certain $D_6$-action on $\Re$.  
In particular, we show that for $\Omega\in\{\Omega_A,\Omega_B,\Omega_C\}$, the elements
\begin{gather*}
A^i D^j B^k
\Omega^\ell
\alpha^r
\delta^s
\beta^t
\qquad \quad
\hbox{for all $i,k,\ell,r,s,t\in \N$ and $j\in \{0,1\}$}
\end{gather*}
are a basis for the $\F$-vector space $\Re$.
The present paper builds upon this work.

We now give an overview of the present paper.  In this paper we explore the relationship between the Racah algebra $\Re$ and the universal enveloping algebra $\U$.  Along this vein there are three main results.  First we show that there exists a unique algebra homomorphism $\natural:\Re\to\FU$ that sends
\begin{eqnarray*}
A  &\mapsto&
a(a+1)\otimes 1+(b-c-a)\otimes x+(a+b-c+1)\otimes y-1\otimes xy,
\\
B  &\mapsto&
b(b+1)\otimes  1+(c-a-b)\otimes y+(b+c-a+1)\otimes z-1\otimes yz,
\\
C  &\mapsto&
c(c+1)\otimes  1+(a-b-c)\otimes z+(c+a-b+1)\otimes x-1\otimes zx,
\\
D  &\mapsto&
1\otimes  (zyx+zx)+
(c+b(c+a-b))\otimes x
+(a+c(a+b-c))\otimes y
\\
&&
\qquad+(b+a(b+c-a))\otimes z +\,(b-c)\otimes xy+(c-a)\otimes yz+(a-b)\otimes zx.\nonumber
\end{eqnarray*}
Here $a,b,c$ are mutually commuting indeterminates, $\F[a,b,c]$ denotes the $\F$-algebra of polynomials in $a,b,c$ that have all coefficients in $\F$, and $x,y,z$ are the equitable generators for $\U$.   
Further, the homomorphism $\natural$ sends
\begin{eqnarray*}
\alpha  &\mapsto&
(1\otimes \Lambda- a(a+1)\otimes 1)\cdot (b(b+1)\otimes 1-c(c+1)\otimes 1),
\\
\beta  &\mapsto&
(1\otimes \Lambda- b(b+1)\otimes 1)\cdot (c(c+1)\otimes 1-a(a+1)\otimes 1),
\\
\gamma  &\mapsto&
(1\otimes \Lambda- c(c+1)\otimes 1)\cdot (a(a+1)\otimes 1-b(b+1)\otimes 1),
\\
\delta  &\mapsto&
1\otimes\Lambda+a(a+1)\otimes 1+b(b+1)\otimes 1+c(c+1)\otimes 1,
\end{eqnarray*}
where $\Lambda$ is the normalized Casimir element of $\U$.  The second main result is that the images of $\Omega_A,\Omega_B,\Omega_C$ under $\natural$ are 
\begin{align*}
&(1\otimes \Lambda+a(a+1)\otimes 1-b(b+1)\otimes 1-c(c+1)\otimes 1)\cdot (a(a+1)\otimes \Lambda-b(b+1)c(c+1)\otimes 1)
\\
&\qquad-(1\otimes \Lambda+a(a+1)\otimes 1)\cdot(b(b+1)\otimes 1+c(c+1)\otimes 1), \\
&(1\otimes \Lambda+b(b+1)\otimes 1-c(c+1)\otimes 1-a(a+1)\otimes 1)\cdot (b(b+1)\otimes \Lambda-c(c+1)a(a+1)\otimes 1)
\\
&\qquad-(1\otimes \Lambda+b(b+1)\otimes 1)\cdot(c(c+1)\otimes 1+a(a+1)\otimes 1),\\
&(1\otimes \Lambda+c(c+1)\otimes 1-a(a+1)\otimes 1-b(b+1)\otimes 1)\cdot (c(c+1)\otimes \Lambda-a(a+1)b(b+1)\otimes 1)
\\
&\qquad-(1\otimes \Lambda+c(c+1)\otimes 1)\cdot(a(a+1)\otimes 1+b(b+1)\otimes 1),
\end{align*}
respectively.  
The third main result is that the homomorphism $\natural$ is an injection and thus provides an embedding of $\Re$ into $\F[a,b,c]\otimes\U$.  
 We use the injection to show that $\Re$ contains no zero divisors.
  
The present paper is organized as follows.
In Section \ref{section:sl}, we recall the Lie algebra $\mathfrak{sl}_2$ and its universal enveloping algebra $U(\mathfrak{sl}_2)$.
In Section \ref{section:comms}, we give some results concerning commutators of elements in $\U$. 
In Section \ref{section:racah}, we recall some facts concerning the Racah algebra $\Re$.
In Section \ref{section:embed}, we show the existence and uniqueness of the homomorphism $\natural:\Re\to\FU$.  
In Section \ref{section:Ugrad}, we discuss a Poincar\'{e}--Birkhoff--Witt basis for $U(\mathfrak{sl}_2)$ and its associated $\Z$-grading of $U(\mathfrak{sl}_2)$.  
In Section \ref{section:FUgrad}, we extend the $\Z$-grading of $\U$ to a $\Z$-grading of $\FU$ and describe the homogeneous components of certain elements of $\FU$.
In Section \ref{section:Casimages}, we discuss the images of the Casimir elements $\Omega_A,\Omega_B,\Omega_C$ under $\natural$. 
In Section \ref{section:algind}, we give some results concerning algebraic independence.
In Section \ref{section:injectivity}, we show that the map $\natural$ is injective.

\section{The Lie algebra $\sl$ and its universal enveloping algebra $\U$}\label{section:sl}
In this section we recall the Lie algebra $\mathfrak{sl}_2$ and its universal enveloping algebra $U(\mathfrak{sl}_2)$.  For additional information on $\mathfrak{sl}_2$ and $U(\mathfrak{sl}_2)$, see \cite{benkart2010, mazorchuk}.

Let $\mathfrak{sl}_2=\mathfrak{sl}_2(\F)$ denote the Lie algebra over $\F$ that has basis $E,F,H$ and Lie bracket
	\begin{gather}
		[H,E]=2E,\qquad\qquad [H,F]=-2F,\qquad\qquad [E,F]=H.
	\end{gather}
We refer to $E,F,H$ as the {\it natural} or {\it standard} basis for $\mathfrak{sl}_2$ and refer to the above presentation as the {\it natural} or {\it standard} presentation for $\mathfrak{sl}_2$.  In \cite{hartwig2}, the authors introduce an alternative basis $X,Y,Z$ of $\sl$ called the {\it equitable} basis which we describe in the lemma below.  For a comprehensive study of the equitable basis of $\mathfrak{sl}_2$, see \cite{benkart2010}.
\begin{lem}{\rm \cite[Lemmas 3.2 and 3.4(i)]{hartwig2}}\label{lemma:sl2equitable}
	The Lie algebra $\mathfrak{sl}_2$ is isomorphic to the Lie algebra over $\F$ that has basis $X,Y,Z$ and Lie bracket
	\begin{gather}
		[X,Y]= X+Y,	\qquad\qquad [Y,Z]= Y+Z, \qquad\qquad [Z,X]= Z+X.
	\end{gather}
	An isomorphism with the standard presentation for $\mathfrak{sl}_2$ is given by 
	\begin{gather}
		X\to -F-H/2,\qquad\qquad Y\to H/2,\qquad\qquad Z\to E-H/2.
	\end{gather}
	The inverse of this isomorphism is given by 
	\begin{gather}
		E\to Y+Z,\qquad\qquad F\to-X-Y,\qquad\qquad H\to 2Y.
	\end{gather}
\end{lem}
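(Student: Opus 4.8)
The plan is to realize the asserted isomorphism explicitly via the second display and to verify directly that it intertwines the two bracket tables. First I would let $\varphi$ denote the linear map from the three-dimensional space spanned by $X,Y,Z$ into $\sl$ determined by $X\mapsto -F-H/2$, $Y\mapsto H/2$, $Z\mapsto E-H/2$. Expressing these three images in the standard basis $E,F,H$, the resulting change-of-basis matrix has determinant $-1/2$, which is nonzero precisely because ${\rm char\,}\F\neq2$; hence $\varphi$ is a linear bijection. Inverting this matrix recovers exactly the third display $E\mapsto Y+Z$, $F\mapsto -X-Y$, $H\mapsto 2Y$, so the two displayed assignments are mutually inverse linear maps.

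It then remains to check that $\varphi$ respects the brackets. Since the bracket is bilinear and antisymmetric, it is enough to verify the three equitable relations on the images, namely
\begin{gather*}
[\varphi(X),\varphi(Y)]=\varphi(X)+\varphi(Y),\qquad
[\varphi(Y),\varphi(Z)]=\varphi(Y)+\varphi(Z),\\
[\varphi(Z),\varphi(X)]=\varphi(Z)+\varphi(X),
\end{gather*}
using only the standard relations $[H,E]=2E$, $[H,F]=-2F$, $[E,F]=H$. Each is a one-line computation; for instance $[\varphi(X),\varphi(Y)]=[-F-H/2,\,H/2]=-\tfrac12[F,H]=-F=\varphi(X)+\varphi(Y)$, and the third reduces to $E-F-H$ on both sides. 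These are the only places where the factors of $1/2$, and hence the hypothesis on the characteristic, are used.

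Taken together, the last two displayed identities say that $\varphi$ carries the bilinear antisymmetric operation defined by the equitable bracket table to the genuine Lie bracket of $\sl$. Because $\varphi$ is a linear bijection and the target bracket satisfies the Jacobi identity, the equitable operation inherits the Jacobi identity as well; thus the equitable table does define a Lie algebra, and $\varphi$ is a Lie algebra isomorphism onto $\sl$ with the stated inverse. I do not anticipate any real obstacle: the entire argument is a finite, routine bracket computation, and the only point demanding care is that every division by $2$ is legitimate, which is exactly guaranteed by the standing assumption ${\rm char\,}\F\neq2$.
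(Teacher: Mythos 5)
Your proposal is correct: the bracket checks ($[\varphi(X),\varphi(Y)]=-F$, $[\varphi(Y),\varphi(Z)]=E$, $[\varphi(Z),\varphi(X)]=E-F-H$), the mutual inverseness of the two displayed assignments, and the determinant $-1/2$ all verify as you state, and transporting the Jacobi identity through the linear bijection is a legitimate way to confirm that the equitable table genuinely defines a Lie algebra. Note that the paper itself offers no proof of this lemma --- it is quoted directly from Hartwig--Terwilliger \cite[Lemmas 3.2 and 3.4(i)]{hartwig2} --- so your direct verification simply supplies the routine computation that the citation stands in for, and it is the natural (indeed essentially the only) way to prove the statement.
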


\begin{note} For notational convenience, for the rest of the present paper, we will identify the copy of $\mathfrak{sl}_2$ given in the original definition with the copy given in Lemma \ref{lemma:sl2equitable}, via the isomorphism given in Lemma \ref{lemma:sl2equitable}.  
\end{note}

We now describe the universal analogue of $\mathfrak{sl}_2$.  By the universal enveloping algebra $\U$ of $\mathfrak{sl}_2$, we mean the  
 $\F$-algebra with generators $e$, $f$, $h$ and relations
\begin{gather}
he-eh=2e, \qquad
hf-fh=-2f, \qquad  
ef-fe= h.\label{eq:Udef}
\end{gather}

There is a natural embedding of $\sl$ into $\U$ via which we associate the basis elements $E,F,H$ of $\sl$ with the generators $e,f,h$ of $\U$.
This association, along with the equitable basis for $\sl$, gives rise to what is known as the {\it equitable presentation} for $\U$.  
  Let
\begin{gather}
x=-f-\frac{h}{2},
\qquad
y=\frac{h}{2},
\qquad
z=e-\frac{h}{2}.\label{eq:Uequit}
\end{gather}
Solving these equations for $e$, $f$, $h$  yields that
\begin{gather}
e=y+z,
\qquad
f=-x-y,
\qquad
h=2y.\label{eq:isowhat}
\end{gather}
We see that $x$, $y$, $z$ generate $\U$.
The elements $x,y,z$ are called the {\it equitable generators} of $\U$ and are subject to the following relations:
\begin{gather}\label{e:eqrelU}
xy-yx=x+y,
\qquad
yz-zy=y+z,
\qquad
zx-xz=z+x.
\end{gather}

We conclude the section by mentioning a notable element of $\U$ which will appear later in our discussion of the Racah algebra.
We let \begin{eqnarray}
\Lambda &=& ef+\frac{h(h-2)}{4}\label{eq:Lambdadef}
\end{eqnarray}
and call $\Lambda$ the {\it normalized Casimir element} of
$U({\mathfrak{sl}_2})$ \cite[p. 11]{mazorchuk}.  
With respect to the equiatble presentation, the normalized Casimir element of $\U$ is given by
\begin{equation}
\Lambda=-\frac{xy+yz+zx+yx+zy+xz}{2}. 
\end{equation}
It is quick to verify that $\Lambda$ is central in $\U$.

\section{Some commutators in $\U$}\label{section:comms}
Recall the universal enveloping algebra $\U$ discussed in Section \ref{section:sl}.  In this section, we give some results concerning commutators of certain elements in $\U$.  These results will be used in Section \ref{section:embed} when proving the first main result.  Throughout the rest of the paper, we let $[\,,\,]$ denote the usual commutator.   

\begin{lem}\label{lemma:[x,xy]}
The following equations hold in $\U$:
\begin{alignat*}{3}
[x,xy]&=x^2+xy,
\qquad
[x,yz]&&=xz-yx, 
\qquad
[x,zx]&&=-x^2-zx,
\\
[y,yz] &= y^2+yz,
\qquad
[y,zx] &&= yx-zy, 
\qquad
[y,xy] &&= -y^2-xy,
\\
[z,zx] &= z^2+zx,
\qquad
[z,xy] &&=zy-xz, 
\qquad
[z,yz] &&= -z^2-yz.
\end{alignat*}
\end{lem}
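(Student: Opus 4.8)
The plan is to derive all nine identities from the three defining equitable relations \eqref{e:eqrelU} by repeated use of the Leibniz rule for the commutator, $[u,vw]=[u,v]w+v[u,w]$, which holds in any associative algebra. I would first record the relations in bracket form, $[x,y]=x+y$, $[y,z]=y+z$, $[z,x]=z+x$, together with their negatives $[y,x]=-(x+y)$, $[z,y]=-(y+z)$, $[x,z]=-(z+x)$. Each of the nine brackets in the statement then expands via the Leibniz rule into a sum of two terms that simplify in one line. For example, $[x,xy]=[x,x]y+x[x,y]=x(x+y)=x^2+xy$, and $[x,yz]=[x,y]z+y[x,z]=(x+y)z-y(z+x)=xz-yx$, which give the first two entries of the top row.

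To avoid repeating essentially identical calculations, I would exploit the cyclic symmetry of the defining relations. The assignment $x\mapsto y$, $y\mapsto z$, $z\mapsto x$ carries $[x,y]=x+y$ to $[y,z]=y+z$, carries $[y,z]=y+z$ to $[z,x]=z+x$, and carries $[z,x]=z+x$ back to $[x,y]=x+y$; since these are precisely the defining relations of $\U$, the assignment extends to an $\F$-algebra automorphism $\sigma$ of $\U$. Applying $\sigma$ to the three identities of the first row produces the three identities of the second row, and applying $\sigma$ once more produces those of the third row. Thus it suffices to verify directly the three brackets $[x,xy]$, $[x,yz]$, $[x,zx]$ appearing in the top row.

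I do not anticipate a genuine obstacle here: once the Leibniz rule and the three defining relations are in hand, each of the three base cases is a one-line computation, and the cyclic automorphism $\sigma$ reduces the nine cases to these three. The only point that requires care is the sign bookkeeping arising from the reversed brackets $[y,x]$, $[z,y]$, $[x,z]$, which are the negatives of the listed relations; keeping track of these correctly is all that stands between the defining relations and the stated identities.
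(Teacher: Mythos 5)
Your proof is correct, and it fills in exactly what the paper leaves implicit: the paper's entire proof of this lemma is ``This result follows from (\ref{e:eqrelU})'', i.e., the same direct expansion of each commutator from the equitable relations that you carry out via the Leibniz rule, and your three sample computations check out. The only organizational difference is your cyclic automorphism $\sigma$, which cuts nine verifications down to three. That step is sound, but its justification needs one more word of care: as the paper defines $\U$ (by the generators $e,f,h$ and relations (\ref{eq:Udef})), the equitable relations (\ref{e:eqrelU}) are derived consequences, not the stated defining relations, so ``these are precisely the defining relations of $\U$'' is not literally what the paper provides. You can repair this either by invoking the standard fact that (\ref{e:eqrelU}) is indeed a presentation of $\U$ (which follows from Lemma \ref{lemma:sl2equitable} together with functoriality of universal enveloping algebras), or more directly by observing that the cyclic permutation $X\mapsto Y\mapsto Z\mapsto X$ preserves the Lie bracket in Lemma \ref{lemma:sl2equitable}, hence is a Lie algebra automorphism of $\mathfrak{sl}_2$ and therefore induces an algebra automorphism of $\U$. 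With that (or simply by writing out all nine one-line computations, as the paper intends), the argument is complete.
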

\begin{proof}
	This result follows from (\ref{e:eqrelU}). 
\end{proof}

\begin{lem}\label{lemma:[xy,yz]}
The following equations hold in $\U$:
\begin{gather*}
[xy,yz]=2xyz+y^2-xz,  
\\
[yz,zx]=2yzx+z^2-yx, 
\\
[zx,xy]=2zxy+x^2-zy. 
\end{gather*}
\end{lem}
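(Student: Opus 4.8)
The plan is to prove the first identity directly and then obtain the remaining two for free by symmetry. First I would note that the cyclic permutation $x\mapsto y\mapsto z\mapsto x$ carries each of the defining relations in (\ref{e:eqrelU}) to another relation in the same list, so it extends to an $\F$-algebra automorphism $\sigma$ of $\U$. Since $\sigma$ sends $[xy,yz]$ to $[yz,zx]$ and $2xyz+y^2-xz$ to $2yzx+z^2-yx$, applying $\sigma$ to the first equation yields the second, and applying it once more yields the third. Hence everything reduces to verifying
\[
[xy,yz]=2xyz+y^2-xz.
\]

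To handle this, I would exploit the derivation property of the commutator in its left argument, namely $[ab,c]=a[b,c]+[a,c]b$, to write
\[
[xy,yz]=x\,[y,yz]+[x,yz]\,y.
\]
The two inner commutators are exactly the ones already computed in Lemma \ref{lemma:[x,xy]}: there we have $[y,yz]=y^2+yz$ and $[x,yz]=xz-yx$. Substituting these in gives $[xy,yz]=xy^2+xyz+xzy-yxy$, a sum of degree-three monomials together with the lower-order term $xy^2$.

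The final step is a normalization. I would use (\ref{e:eqrelU}) in the form $zy=yz-y-z$ and $yx=xy-x-y$ to rewrite the two monomials $xzy$ and $yxy$ that are out of $x,y,z$ order, so that every term is expressed through the ordered monomials $xyz$, $xy^2$, $xy$, $xz$, and $y^2$. After this substitution the two $xy^2$ contributions cancel, the $\pm xy$ terms cancel, and the two resulting copies of $xyz$ combine, leaving precisely $2xyz+y^2-xz$. I do not anticipate a genuine obstacle here: the argument is routine once the derivation identity and Lemma \ref{lemma:[x,xy]} are in hand, and the only point requiring care is the bookkeeping in this last reordering, where one must consistently push each factor into $x,y,z$ order and check that the spurious terms indeed cancel.
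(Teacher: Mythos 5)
Your proof is correct and takes essentially the same route as the paper, whose proof simply asserts that the identities follow from (\ref{e:eqrelU}) and Lemma~\ref{lemma:[x,xy]}: your expansion $[xy,yz]=x[y,yz]+[x,yz]y$ followed by reordering via $zy=yz-y-z$ and $yx=xy-x-y$ is exactly that verification, and the bookkeeping checks out ($xy^2$ and $\pm xy$ cancel, leaving $2xyz+y^2-xz$). The reduction of the second and third identities to the first via the cyclic automorphism $x\mapsto y\mapsto z\mapsto x$ of $\U$ is a valid and harmless streamlining of the repetition the paper leaves implicit.
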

\begin{proof}
	This result follows from (\ref{e:eqrelU}) and Lemma \ref{lemma:[x,xy]}. 
\end{proof}

\begin{lem}\label{lemma:w}
The following elements of $\U$ coincide:
\begin{align}
&zyx+zx,
&&zxy-zy,
&&yzx-yx,
\label{eq:w1}\\
&xzy+xy,
&&yxz+yz,
&&xyz-xz.
\label{eq:w2}
\end{align}
\end{lem}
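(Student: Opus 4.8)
The plan is to connect all six elements by a short chain of equalities, where each link is produced by a single application of one of the equitable relations (\ref{e:eqrelU}) to transpose an adjacent pair of generators inside a degree-three monomial. The mechanism is uniform: when a relation is used to swap two adjacent generators, it produces a degree-two correction term that is precisely the $\pm$ degree-two term already attached to the expression, so the two cancel. For example, left-multiplying $xy=yx+x+y$ by $z$ gives $zxy=zyx+zx+zy$, hence $zxy-zy=zyx+zx$; this identifies the second element of (\ref{eq:w1}) with the first. Similarly, right-multiplying $yz=zy+y+z$ by $x$ gives $yzx=zyx+yx+zx$, hence $yzx-yx=zyx+zx$, which identifies the third element of (\ref{eq:w1}) with the first.

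To avoid repeating essentially the same computation six times, I would use the cyclic symmetry of (\ref{e:eqrelU}). Since those relations are invariant under the substitution $x\mapsto y\mapsto z\mapsto x$, this substitution extends to an $\F$-algebra automorphism $\sigma$ of $\U$. One checks directly that $\sigma$ permutes the six elements of (\ref{eq:w1})--(\ref{eq:w2}) in two disjoint three-cycles, namely $zyx+zx\mapsto xzy+xy\mapsto yxz+yz\mapsto zyx+zx$ and $zxy-zy\mapsto xyz-xz\mapsto yzx-yx\mapsto zxy-zy$. Each of the two base identities above links an element of the first cycle to an element of the second, so applying $\sigma$ and $\sigma^2$ propagates each base identity to three equalities. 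Taken together, these equalities connect all six elements, forcing them to coincide.

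I do not expect a genuine obstacle; the content is entirely a matter of bookkeeping. The only points that require care are (i) choosing, for each link, the correct relation and the correct side on which to multiply so that the induced quadratic term carries the sign needed to cancel the trailing term, and (ii) confirming that the equalities obtained from the two base identities together form a connected graph on the six elements, rather than splitting them into two separate classes of mutually equal elements. Since each base identity crosses between the two $\sigma$-cycles, and they do so in different ways, connectivity holds, and the lemma follows.
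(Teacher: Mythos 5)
Your proposal is correct, and its computational core is the same as the paper's: the paper likewise derives exactly two base identities, $zyx+zx=z(xy-x-y)+zx=zxy-zy$ and $zyx+zx=(yz-y-z)x+zx=yzx-yx$, from the equitable relations \eqref{e:eqrelU}. Where you differ is in how the remaining equalities are obtained: the paper simply states that they "can be similarly obtained," meaning four more direct computations of the same kind, whereas you invoke the cyclic substitution $x\mapsto y\mapsto z\mapsto x$ as an algebra automorphism $\sigma$ of $\U$ and check that it permutes the six elements in two $3$-cycles, so that the two base identities (which link the cycles to each other in two different ways) propagate to a connected set of equalities. Your route is more economical and makes the symmetry explicit; its one soft spot is the justification that $\sigma$ exists. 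Invariance of the relations \eqref{e:eqrelU} under the substitution yields an automorphism only if those relations \emph{present} $\U$, which the paper never states (it defines $\U$ by the Chevalley generators $e,f,h$). The cleanest repair is to note that, by Lemma \ref{lemma:sl2equitable}, $X,Y,Z$ is a basis of $\mathfrak{sl}_2$ whose brackets are cyclically invariant, so the cyclic permutation of $X,Y,Z$ is a Lie algebra automorphism of $\mathfrak{sl}_2$, and every such automorphism extends functorially to an algebra automorphism of the enveloping algebra; with that one sentence added, your argument is complete. Your connectivity check is also essential and correctly handled: a single base identity would only split the six elements into three pairs, and it is precisely because the two identities join the two $\sigma$-orbits along different edges that all six elements are forced to coincide.
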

\begin{proof}
	By \eqref{e:eqrelU}, we see that 
	$zyx+zx=z(xy-x-y)+zx=zxy-zy$
	and also that
	$zyx+zx=(yz-y-z)x+zx=yzx-yx$.
	So the expressions in \eqref{eq:w1} are all equal.  The remaining equalities can be similarly obtained.
\end{proof}

\begin{defn}\label{def:w}
Let $w$ denote the common element of $\U$ given in Lemma \ref{lemma:w}.
\end{defn}

\begin{lem}\label{lemma:[w,x]}
The following equations hold in $\U$:
\begin{gather}
[w,x]=xyx-xzx,\label{eq:wx}
\\
[w,y]=yzy-yxy,\label{eq:wy}
\\
[w,z]=zxz-zyz.\label{eq:wz}
\end{gather}
\end{lem}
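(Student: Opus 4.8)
The plan is to exploit the several equivalent expressions for $w$ furnished by Lemma \ref{lemma:w} and to reduce all three identities to a single computation by a symmetry argument. First I would note that the equitable relations \eqref{e:eqrelU} are preserved by the cyclic substitution $x\mapsto y\mapsto z\mapsto x$, so this substitution extends to an $\F$-algebra automorphism $\sigma$ of $\U$. Inspecting the six expressions for $w$ in Lemma \ref{lemma:w}, one sees that $\sigma$ permutes them among themselves; for instance $\sigma$ carries $zyx+zx$ to $xzy+xy$, another of the listed expressions. Hence $\sigma(w)=w$. Applying $\sigma$ to the target identity $[w,x]=xyx-xzx$ then yields $[w,y]=yzy-yxy$, which is \eqref{eq:wy}, and applying $\sigma$ once more yields $[w,z]=zxz-zyz$, which is \eqref{eq:wz}. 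Thus it suffices to establish \eqref{eq:wx}.

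To prove $[w,x]=xyx-xzx$, I would select from Lemma \ref{lemma:w} the expression $w=xyz-xz$, chosen because $x$ sits at the left of each term, so that the Leibniz rule isolates a single inner commutator. Expanding gives $[w,x]=[xyz,x]-[xz,x]=x[yz,x]-x[z,x]$, using $[x,x]=0$. Now Lemma \ref{lemma:[x,xy]} supplies $[x,yz]=xz-yx$, whence $[yz,x]=yx-xz$, and relation \eqref{e:eqrelU} gives $[z,x]=z+x$. Substituting and collecting terms leaves $[w,x]=xyx-x^2z-xz-x^2$.

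The final step is to recognize this right-hand side as $xyx-xzx$. Using $zx=xz+z+x$ from \eqref{e:eqrelU}, I would normal-order $xzx=x(zx)=x^2z+xz+x^2$, so that $xyx-xzx=xyx-x^2z-xz-x^2$, matching the expression just obtained. This completes \eqref{eq:wx}, and the automorphism $\sigma$ then delivers \eqref{eq:wy} and \eqref{eq:wz}. The whole argument is routine: there is no genuine obstacle, the only delicate point being the bookkeeping in re-expressing $xzx$ in normal-ordered form, and the symmetry reduction via $\sigma$ is what keeps the write-up short by avoiding two nearly identical calculations.
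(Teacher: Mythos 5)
Your proof is correct, but it takes a genuinely different route on both counts. For the base identity \eqref{eq:wx}, the paper uses two of the expressions from Lemma \ref{lemma:w} simultaneously: writing $wx=(xyz-xz)x$ and $xw=x(yzx-yx)$, the degree-four terms $xyzx$ cancel on subtraction and $[w,x]=xyx-xzx$ falls out in one line, with no appeal to Lemma \ref{lemma:[x,xy]} or to any normal ordering. You instead work from the single expression $w=xyz-xz$, apply the Leibniz rule, quote $[x,yz]=xz-yx$ and $[z,x]=z+x$, and then must rewrite $xzx=x^2z+xz+x^2$ to identify the result; I checked the bookkeeping ($x(yx-xz)-x(z+x)=xyx-x^2z-xz-x^2=xyx-xzx$) and it is right, just longer than the paper's cancellation trick. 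For the other two identities, the paper's ``can be shown similarly'' is replaced in your write-up by an honest symmetry argument: the cyclic substitution $x\mapsto y\mapsto z\mapsto x$ permutes the relations \eqref{e:eqrelU} and permutes the six expressions in Lemma \ref{lemma:w}, so it fixes $w$ and carries \eqref{eq:wx} to \eqref{eq:wy} and then to \eqref{eq:wz}. That reduction is sound, with one small caveat: to conclude that the substitution extends to an automorphism $\sigma$ of $\U$ merely from the fact that it preserves \eqref{e:eqrelU}, you are implicitly using that \eqref{e:eqrelU} is a presentation of $\U$, which the paper never asserts; the gap is easily closed by noting instead that $X,Y,Z$ is a basis of $\sl$ (Lemma \ref{lemma:sl2equitable}) whose brackets are cyclically symmetric, so the cyclic permutation is a Lie algebra automorphism and extends to $\U$ by the universal property of the enveloping algebra. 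Net comparison: the paper's two-expression trick is the slicker path to \eqref{eq:wx}, while your automorphism $\sigma$ makes the ``similarly'' rigorous and spares the two repeated computations.
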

\begin{proof}
	We first show \eqref{eq:wx}.  By Lemma \ref{lemma:w} and Definition \ref{def:w}, $w=xyz-xz$ and $w=yzx-yx$.  Thus, 
	\begin{align*}
		[w,x]&=wx-xw\\
		&=(xyz-xz)x-x(yzx-yx)\\
		&=xyx-xzx.
	\end{align*}
	Equations \eqref{eq:wy},\eqref{eq:wz} can be shown similarly.
\end{proof}

\begin{lem}\label{lemma:[w,xy]}
The following equations hold in $\U$:
\begin{align*}
[w,xy]&=
yzxy - xyzx + xyx - yxy,
\\
[w,yz]&=
zxyz - yzxy + yzy - zyz,
\\
[w,zx]&=
xyzx - zxyz + zxz - xzx.
\end{align*}
\end{lem}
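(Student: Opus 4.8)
The plan is to exploit the fact that, for the fixed element $w$, the map $u\mapsto[w,u]$ is a derivation of $\U$; that is, $[w,uv]=[w,u]v+u[w,v]$ for all $u,v\in\U$. Applying this with $u=x$, $v=y$ gives
\[
[w,xy]=[w,x]\,y+x\,[w,y],
\]
and cyclically for $[w,yz]$ and $[w,zx]$. Substituting the values of $[w,x]$, $[w,y]$, $[w,z]$ from Lemma \ref{lemma:[w,x]} then reduces each of the three claims to an identity among words of degree at most four in $x,y,z$. I prefer this route over expanding $[w,xy]=wxy-xyw$ directly, since any single form of $w$ from Lemma \ref{lemma:w} produces degree-five words whose cancellation itself requires the relations; the derivation property sidesteps that.

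Before substituting, I would first reduce the problem to a single identity. The relations \eqref{e:eqrelU} are invariant under the cyclic permutation $x\mapsto y\mapsto z\mapsto x$, so this permutation extends to an algebra automorphism $\rho$ of $\U$. By Lemma \ref{lemma:w} the element $w$ is fixed by $\rho$, and one checks that $\rho$ sends the first stated identity to the second and the second to the third. Hence it suffices to establish the formula for $[w,xy]$ and then apply $\rho$ twice.

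For the remaining identity, substituting $[w,x]=xyx-xzx$ and $[w,y]=yzy-yxy$ into the Leibniz expansion gives $[w,xy]=xyxy-xzxy+xyzy-xyxy=xyzy-xzxy$, since the two copies of $xyxy$ cancel. It then remains to check that
\[
xyzy-xzxy=yzxy-xyzx+xyx-yxy .
\]
This can be done purely from \eqref{e:eqrelU}: bringing $yzxy$ to the left and using $[x,yz]=xz-yx$ from Lemma \ref{lemma:[x,xy]} (which gives $xyz=yzx+xz-yx$, hence $xyzy-yzxy=xzy-yxy$) reduces the claim to $xzxy-xyzx+xyx=xzy$, which follows after expanding $xzxy$ and $xyzx$ with $zx=xz+z+x$ and $zy=yz-y-z$. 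Alternatively, both sides can simply be rewritten in a fixed Poincar\'{e}--Birkhoff--Witt normal form and compared termwise.

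I expect the only real work to be this last reconciliation: the derivation property produces the compact two-term expression $xyzy-xzxy$, whereas the statement records the more symmetric four-term form, so one must massage between them using \eqref{e:eqrelU}. This is elementary but demands care in tracking the lower-order correction terms, and passing to a normal form is the safest way to avoid error.
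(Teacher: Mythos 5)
Your proposal is correct and in substance coincides with the paper's (very terse) proof: the paper derives these identities from exactly the two ingredients you use, namely Lemma \ref{lemma:[w,x]} and the relations \eqref{e:eqrelU}, with your Leibniz expansion $[w,xy]=[w,x]y+x[w,y]$ being the natural way to combine them, and your final reconciliation $xyzy-xzxy=yzxy-xyzx+xyx-yxy$ does hold (note only that verifying the reduced identity $xzxy-xyzx+xyx=xzy$ needs the relation $[x,y]=x+y$ as well, not just the $zx$ and $zy$ rewrites you cite, though your PBW fallback covers this). Your extra observation that the cyclic automorphism $x\mapsto y\mapsto z\mapsto x$ fixes $w$ and cyclically permutes the three identities is a correct and tidy refinement, but it does not change the underlying approach.
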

\begin{proof}
	This result follows from (\ref{e:eqrelU}) and Lemma \ref{lemma:[w,x]}. 
\end{proof}

\section{The Racah algebra $\Re$}\label{section:racah}

In this section we recall some facts about the Racah algebra $\Re$ from \cite{racah1} and define a bilinear form on $\Re$ which we will use when proving the third main result in Section \ref{section:injectivity}.

\begin{defn}\cite[Definition 3.1]{racah1}\label{def:Re}
Define an $\F$-algebra $\Re$ by generators and relations in the following way. The generators are $A$, $B$, $C$, $D$. The relations assert that
\begin{gather}\label{r:D}
[A,B]=[B,C]=[C,A]=2D
\end{gather}
and each of
\begin{gather*}
[A,D]+AC-BA,
\qquad
[B,D]+BA-CB,
\qquad
[C,D]+CB-AC
\end{gather*}
is central in $\Re$.  We call $\Re$ the {\it Racah algebra}.
\end{defn}

For notational convenience, we let 
\begin{eqnarray}
\alpha &=& [A,D]+AC-BA, \label{r:alpha} \\
\beta &=& [B,D]+BA-CB,\label{r:beta}\\
\gamma &=& [C,D]+CB-AC,\label{r:gamma}\\
\delta &=& A+B+C. \label{r:delta}
\end{eqnarray}
Each of $\alpha, \beta, \gamma,\delta$ is central in $\Re$ by \cite[Lemma 3.2]{racah1}.  

Let $\Ce$ denote the commutative subalgebra of $\Re$ generated by $\alpha$, $\beta$, $\gamma$, $\delta$.  By \cite[Lemma 5.2]{racah1}, 
the elements 
 $\{ \alpha^r\delta^s\beta^t | 
r,s,t\in \N \}$ 
 form a basis for the  $\F$-vector space $\Ce$.  Let the curly bracket $\{\,,\,\}$ stand for the anticommutator. 
We call the coset 
\begin{gather*}
A^2+B^2+D^2
+\frac{\delta+2}{2}\{A,B\}
-\frac{\{A^2,B\}}{2}
-\frac{\{A,B^2\}}{2}
+A (\beta-\delta)
-B (\alpha-\delta)
+\Ce
\end{gather*}
the {\it Casimir class} of $\Re$. An element $\Omega$ of $\Re$ is called a {\it Casimir element} if $\Omega$ lies in the Casimir class of $\Re$. 
Observe that the difference of any two Casimir elements of $\Re$ can be expressed as a polynomial in $\alpha, \beta, \delta$. 
By \cite[Proposition 6.7]{racah1}, each element of the Casimir class is central in $\Re$.  By \cite[Lemma 7.11]{racah1}, if ${\rm char\,} \F=0$ and $\Omega$ is any Casimir element of $\Re$, then $Z(\Re)=C[\Omega]$.

Define three elements $\Omega_A$, $\Omega_B$, $\Omega_C$ of $\Re$ by
\begin{eqnarray}
\Omega_A
&=&
D^2
+
\frac{B A C
+C A B}{2}
+ A^2
+B \gamma
-C \beta
-A \delta,
\label{eq:CasA}
\\
\Omega_B
&=&
D^2
+
\frac{C B A
+A B C}{2}
+ B^2
+C \alpha 
-A \gamma
-B\delta,
\label{eq:CasB}
\\
\Omega_C
&=&
D^2
+
\frac{A C B
+B C A}{2}
+ C^2 
+A \beta 
-B\alpha
-C\delta.
\label{eq:CasC}
\end{eqnarray}
By \cite[Proposition 6.4]{racah1}, 
each of $\Omega_A$, $\Omega_B$, $\Omega_C$ is a Casimir element of $\Re$.
By \cite[Lemma 7.1 \& Theorem 7.5]{racah1}, for $\Omega\in \{\Omega_A,\Omega_B,\Omega_C\}$, 
 the elements
\begin{gather}
A^i D^j B^k
\Omega^\ell
\alpha^r
\delta^s
\beta^t
\qquad \quad
\hbox{for all $i,k,\ell,r,s,t\in \N$ and $j\in \{0,1\}$}\label{eq:basisURA2}
\end{gather}
are a basis for the $\F$-vector space $\Re$.
Later in this paper we will be considering elements of $\Re$ as linear combinations of these basis elements.  To aid in our discussion, we define a bilinear form $\langle,\rangle:\Re\times\Re\to\F$ such that  $\langle u,v\rangle=\delta_{u,v}$ for all $u,v$ in the basis (\ref{eq:basisURA2}).  Observe that the basis (\ref{eq:basisURA2}) is orthnormal with respect to $\langle,\rangle$.  We see that for $u\in\Re$, 
\begin{equation*}
u=\sum_{\substack{i,k,\ell,r,s,t\in \N,\\ j\in \{0,1\}}}\langle u, A^i D^j B^k
\Omega^\ell
\alpha^r
\delta^s
\beta^t
\rangle A^i D^j B^k
\Omega^\ell
\alpha^r
\delta^s
\beta^t.
\end{equation*}
Note that there are only finitely many nonzero summands in this sum.

\section{A homomorphism from $\Re$ into $\F[a,b,c]\otimes \U$}\label{section:embed}

Recall from the Introduction that $a,b,c$ are three mutually commuting indeterminates and that $\F[a,b,c]$ denotes the $\F$-algebra of polynomials in $a,b,c$ that have all coefficients in $\F$. 
In this section, we prove the first theorem concerning the map $\natural:\Re\to\FU$. 

\begin{thm}\label{thm:natural}
There exists a unique $\F$-algebra homomorphism $\natural:\Re\to \F[a,b,c]\otimes \U$ that sends
\begin{eqnarray}
A  &\mapsto&
a(a+1)\otimes 1+(b-c-a)\otimes x+(a+b-c+1)\otimes y-1\otimes xy,
\label{e:Anatural}
\\
B  &\mapsto&
b(b+1)\otimes  1+(c-a-b)\otimes y+(b+c-a+1)\otimes z-1\otimes yz,
\label{e:Bnatural}
\\
C  &\mapsto&
c(c+1)\otimes  1+(a-b-c)\otimes z+(c+a-b+1)\otimes x-1\otimes zx,
\label{e:Cnatural}
\\
D  &\mapsto&
1\otimes  w+
(c+b(c+a-b))\otimes x
+(a+c(a+b-c))\otimes y
\label{e:Dnatural}
\\
&&
\qquad+(b+a(b+c-a))\otimes z +\,(b-c)\otimes xy+(c-a)\otimes yz+(a-b)\otimes zx,\nonumber
\end{eqnarray}
where $x,y,z$ are the equitable generators for $\U$.  The homomorphism $\natural$ sends 
\begin{eqnarray}
\alpha  &\mapsto&
(1\otimes \Lambda- a(a+1)\otimes 1)\cdot (b(b+1)\otimes 1-c(c+1)\otimes 1),
\label{e:alphnatural}
\\
\beta  &\mapsto&
(1\otimes \Lambda- b(b+1)\otimes 1)\cdot (c(c+1)\otimes 1-a(a+1)\otimes 1),
\label{e:betanatural}
\\
\gamma  &\mapsto&
(1\otimes \Lambda- c(c+1)\otimes 1)\cdot (a(a+1)\otimes 1-b(b+1)\otimes 1),
\label{e:gamnatural}
\\
\delta  &\mapsto&
1\otimes\Lambda+a(a+1)\otimes 1+b(b+1)\otimes 1+c(c+1)\otimes 1,
\label{e:delnatural}
\end{eqnarray}
where $\Lambda$ denotes the normalized Casimir element of $\U$.
\end{thm}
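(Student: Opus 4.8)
The plan is to establish the theorem in two stages: first prove the existence and uniqueness of $\natural$ as an $\F$-algebra homomorphism, and then compute the images of $\alpha,\beta,\gamma,\delta$ as a corollary of the explicit formulas for the images of $A,B,C,D$. For existence and uniqueness, since $\Re$ is presented by generators $A,B,C,D$ subject to the relations in Definition \ref{def:Re}, it suffices to verify that the proposed images, which I will denote $A^\natural,B^\natural,C^\natural,D^\natural\in\FU$, satisfy exactly those defining relations. Uniqueness is then immediate because $A,B,C,D$ generate $\Re$, so any homomorphism agreeing with $\natural$ on the generators agrees everywhere. The substantive content is the relation-checking.

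First I would verify the three bracket relations $[A^\natural,B^\natural]=[B^\natural,C^\natural]=[C^\natural,A^\natural]=2D^\natural$. The key observation is that each of $A^\natural,B^\natural,C^\natural$ is a sum of the central scalar term $a(a+1)\otimes 1$ (resp.\ $b,c$ versions) with $1$-tensored linear/quadratic expressions in $x,y,z$; the scalar coefficients in $\F[a,b,c]$ commute with everything in the first tensor factor, so each commutator reduces to an $\F[a,b,c]$-linear combination of commutators of the monomials $x,y,z,xy,yz,zx$ in $\U$. These are precisely the commutators catalogued in Lemmas \ref{lemma:[x,xy]} and \ref{lemma:[xy,yz]}, and the element $w$ of Definition \ref{def:w} together with Lemma \ref{lemma:w} is engineered so that the resulting expression collapses to $2D^\natural$. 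So this step is a bookkeeping computation: expand $[A^\natural,B^\natural]$ using bilinearity, substitute the commutator values from the Section \ref{section:comms} lemmas, collect coefficients of each $\U$-monomial, and match against the formula \eqref{e:Dnatural} for $D^\natural$.

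Next I would check that $\alpha^\natural=[A^\natural,D^\natural]+A^\natural C^\natural-B^\natural A^\natural$, and its cyclic analogues $\beta^\natural,\gamma^\natural$, are central in $\FU$. Since $\Lambda$ is central in $\U$ and every element of $\F[a,b,c]\otimes 1$ is central in $\FU$, the claimed image \eqref{e:alphnatural} is manifestly central; thus the real task is to prove the \emph{identity} that $[A^\natural,D^\natural]+A^\natural C^\natural-B^\natural A^\natural$ equals the right-hand side of \eqref{e:alphnatural}. This is where Lemma \ref{lemma:[w,x]} and Lemma \ref{lemma:[w,xy]} enter, since $[A^\natural,D^\natural]$ produces brackets of $w$ against $x,y,z,xy,yz,zx$. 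Having verified centrality this way, the defining relations of $\Re$ are satisfied, completing existence; and the image formulas \eqref{e:alphnatural}--\eqref{e:gamnatural} come out of the same computation. The image \eqref{e:delnatural} of $\delta=A+B+C$ follows by simply summing $A^\natural+B^\natural+C^\natural$ and simplifying, where the linear and quadratic $\U$-terms combine via the definition of $\Lambda$ in its equitable form.

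The main obstacle is the sheer volume of the commutator algebra in $\U$: the expansions of $[A^\natural,D^\natural]$ and $A^\natural C^\natural-B^\natural A^\natural$ generate many noncommutative monomials of degree up to three in $x,y,z$ (such as $xyx$, $yzxy$, $zxyz$), and verifying that these cancel to leave only the central combination in $\Lambda$ requires careful normalization of all such monomials to a fixed ordering using \eqref{e:eqrelU}. My strategy to control this is to lean entirely on the precomputed commutators of Section \ref{section:comms}, especially Lemmas \ref{lemma:[w,x]} and \ref{lemma:[w,xy]}, rather than expanding everything from the relations \eqref{e:eqrelU} directly; the element $w$ is introduced precisely to package the degree-three cancellations cleanly. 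The symmetry of the construction under the cyclic rotation $(a,b,c,x,y,z)\mapsto(b,c,a,y,z,x)$ means that once the computation for one of the three cyclic families is done, the other two follow by relabeling, which cuts the work by a factor of three.
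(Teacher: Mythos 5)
Your proposal is correct and follows essentially the same route as the paper: verify the defining relations of $\Re$ for $A^\natural,B^\natural,C^\natural,D^\natural$ by reducing each identity, via the precomputed commutators of Section \ref{section:comms}, to matching coefficients of monomials in $a,b,c$ with entries in $\U$, observe that $\alpha^\natural,\beta^\natural,\gamma^\natural,\delta^\natural$ are manifestly central in $\FU$ because $\Lambda$ is central in $\U$, and obtain uniqueness because $A,B,C,D$ generate $\Re$. Your additional observation that the cyclic substitution $(a,b,c,x,y,z)\mapsto(b,c,a,y,z,x)$ is an algebra automorphism permuting $A^\natural\to B^\natural\to C^\natural$ and fixing $D^\natural$ is a valid refinement that formalizes the paper's remark that the remaining equations follow by ``similar arguments.''
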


For notational convenience, throughout the rest of the paper we let  $A^\natural$, $B^\natural$, $C^\natural$, $D^\natural$, $\alpha^\natural$, $\beta^\natural$, $\gamma^\natural$, $\delta^\natural$ denote the expressions occurring on the right-hand side of (\ref{e:Anatural})--(\ref{e:delnatural}) respectively.
Note that since $\Lambda$ is central in $\U$, each of $\alpha^\natural$, $\beta^\natural$, $\gamma^\natural$, $\delta^\natural$ is central in $\FU$.

\begin{proof}[Proof of Theorem \ref{thm:natural}:]
 To establish the existence of the homomorphism $\natural$, it suffices to verify that
\begin{equation}\label{e:natural[AB]=D}
[A^\natural,B^\natural]
=[B^\natural,C^\natural]
=[C^\natural,A^\natural]
=2 D^\natural
\end{equation}
and
\begin{eqnarray}
\alpha^\natural&=&[A^\natural,D^\natural]
+A^\natural C^\natural-B^\natural A^\natural,\label{eq:alphanat}
\\
\beta^\natural&=&[B^\natural,D^\natural]
+B^\natural A^\natural-C^\natural B^\natural,\label{eq:betanat}
\\
\gamma^\natural&=&[C^\natural,D^\natural]
+C^\natural B^\natural-A^\natural C^\natural,\label{eq:gammanat}
\\
\delta^\natural&=&A^\natural+B^\natural+C^\natural.\label{eq:deltanat}
\end{eqnarray}

Equations (\ref{e:natural[AB]=D})--(\ref{eq:deltanat}) can be verified through routine, though tedious, computations.  The general strategy is as follows.
View $\F[a,b,c]\otimes \U$ as the ring of polynomials in $a,b,c$ with all coefficients in 
 $\U$.  
For each side of a given equation, determine which monomials in $a,b,c$ occur with nonzero coefficients.  For a given monomial, use (\ref{e:eqrelU}) along with Lemmas \ref{lemma:[x,xy]}--\ref{lemma:w} and \ref{lemma:[w,x]}--\ref{lemma:[w,xy]} to show that the corresponding coefficients on each side of the equation are equal to one another. 

 To help illustrate this strategy, we now show how to verify that $[A^\natural,B^\natural]=2D^\natural$ as an example. 
 Observe that the terms with nonzero coefficients in $[A^\natural, B^\natural]$ or $2D^\natural$ are
\begin{equation*}
a^2,
\quad
b^2,
\quad
c^2,
\quad
ab,
\quad
bc, 
\quad
ca,
\quad
a,
\quad
b,
\quad
c,
\quad
1.
\end{equation*}
For each of these terms, we list the corresponding coefficients in $[A^\natural, B^\natural]$ and $2D^\natural$ in the table below, along with the  reason why the coefficients are equal.

\begin{table}[H]
\small
\centering
\extrarowheight=3pt
\begin{tabular}{c|c|c|c}
term
&coefficient in $[A^\natural, B^\natural]$
&coefficient in $2D^\natural$
&how to verify they are equal
\\
\hline
\hline

$a^2$
&$[x,y]-[y,z]-[z,x]$
& $-2z$

&\multirow{6}{*}{Use (\ref{e:eqrelU}).}
\\

$b^2$
&$[y,z]-[z,x]-[x,y]$
& $-2x$
\\

$c^2$
&$[z,x]-[x,y]-[y,z]$
& $-2y$
\\

\cline{1-3}

$ab$
&$2[z,x]$
& $2z+2x$
\\

$bc$
&$2[x,y]$
& $2x+2y$
\\

$ca$
&$2[y,z]$
& $2y+2z$
\\

\hline

$a$
&$[x-y,yz]-[y+z,xy]+[z,x]$
&$2zx-2yz+2y$
&\multirow{3}{*}{Use (\ref{e:eqrelU}) and Lemma \ref{lemma:[x,xy]}.}
\\

$b$
&$[z-y,xy]-[x+y,yz]-[z,x]+2[y,z]$
&$2xy-2zx+2z$
\\

$c$
&$[y+z,xy]+[x+y,yz]+[z,x]$
&$2yz-2xy+2x$
\\

\hline

$1$
&$[y,z]-[y,yz]+[z,xy]+[xy,yz]$
&$2w$
&
Use (\ref{e:eqrelU}) and 
Lemmas \ref{lemma:[x,xy]}--\ref{lemma:w}.

\end{tabular}
\end{table}

\noindent From the above table, it now follows that $[A^\natural,B^\natural]=2D^\natural$. The remaining equations in (\ref{e:natural[AB]=D})--(\ref{eq:deltanat}) can be verified through
 similar arguments.

Note that the homomorphism is unique since $A,B,C, D$ generate $\Re$. 
\end{proof}

\section{A $\Z$-grading of $\U$}\label{section:Ugrad}
In Section \ref{section:sl}, we recalled the universal enveloping algebra $\U$.  In this section, we recall a Poincar\'{e}--Birkhoff--Witt basis for $\U$ and discuss the associated $\Z$-grading of $\U$.  In the next section, we will extend the $\Z$-grading of $\U$ to a $\Z$-grading of $\FU$.  We will use these gradings when proving Theorems \ref{thm:Casimage} and \ref{thm:inj}. 

We first establish some terminology which we will use going forward.  
Let  $\mathcal A$ denote an $\F$-algebra and let  $\mathcal H, \mathcal K$ denote  $\F$-subspaces of  $\mathcal A$. By  $\mathcal H\cdot \mathcal K$, we mean the $\F$-subspace of  $\mathcal A$ spanned by $hk$ for all $h\in \mathcal H$ and $k\in \mathcal K$. For all $n\in \N$, the notation $\mathcal H^n$ stands for 
\begin{equation*}
 \underbrace{\mathcal H\cdot \mathcal H\cdots \mathcal H}_\text{$n$ copies}
\end{equation*}
For notational convenience, we define $\mathcal H^0$ to be $\F 1$, where $1$ is the unit of $\mathcal A$. 

Following \cite[p.202]{carter}, we define 
graded algebras as follows. 
We call the algebra $\mathcal A$ a {\it $\Z$-graded algebra} if there exists a decomposition 
\begin{equation*}
\mathcal A=\cdots \oplus \mathcal A_{-2}\oplus \mathcal A_{-1}\oplus\mathcal  A_0\oplus \mathcal A_1\oplus \mathcal A_2\oplus\cdots
\end{equation*}
of $\mathcal A$ into a direct sum of subspaces such that $\mathcal A_i\cdot \mathcal A_j\subseteq \mathcal A_{i+j}$ for all $i,j\in\Z$.
In this case, we refer to the sequence $\{\mathcal A_i\}_{i\in\Z}$ as a $\Z$-{\it grading} of $\mathcal A$.  
For $i\in\Z$, we refer to $\mathcal A_i$ as the  {\it $i$-homogeneous component} of $\mathcal A$ and refer to $i$ as the  {\it degree} of $\mathcal A_i$.  An element of $\mathcal A$ is said to be  {\it homogeneous with degree} $i$ whenever it is contained in $\mathcal A_i$.

We now turn our attention to a basis for $\U$.  
\begin{lem}{\rm\label{lemma:UPBW}\cite[Theorem 2.13]{mazorchuk}}
The elements
\begin{gather*}
e^i h^j f^k
\qquad \quad
\hbox{$i,j,k\in \N$}
\end{gather*}
form a basis of $\U$.  
\end{lem}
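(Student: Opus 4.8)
The plan is to establish the spanning and linear-independence claims at once by exhibiting $\{e^ih^jf^k\}$ as the normal forms of a terminating, confluent rewriting system; this is the Poincar\'e--Birkhoff--Witt theorem specialized to $\sl$, and I would give a self-contained proof via Bergman's Diamond Lemma. Fix the order $e<h<f$ on the generators and read the defining relations \eqref{eq:Udef} as the reduction rules
\[
fe\;\longmapsto\;ef-h,\qquad he\;\longmapsto\;eh+2e,\qquad fh\;\longmapsto\;hf+2f.
\]
Each rule replaces an out-of-order adjacent pair by its sorted product plus a correction term of strictly smaller total degree. Call a word in $e,h,f$ \emph{reduced} if it has the form $e^ih^jf^k$; these are exactly the words to which no rule applies.

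For the spanning claim, I would first check that the rewriting process terminates: assign to each word the pair consisting of its total degree and its number of inversions, ordered lexicographically, and observe that applying a rule either fixes the degree while removing one inversion (the sorted leading term) or strictly lowers the degree (the correction terms $-h$, $+2e$, $+2f$). Since $e,f,h$ generate $\U$, repeatedly applying the rules rewrites an arbitrary word as an $\F$-linear combination of reduced words, so the monomials $e^ih^jf^k$ span $\U$.

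Linear independence then follows from the Diamond Lemma. The reduction system is terminating with respect to the degree--inversion measure above, which is compatible with multiplication since sorting preserves the multiset of letters, so it suffices to verify that every overlap ambiguity is resolvable. The three left-hand sides $fe$, $he$, $fh$ overlap in exactly one word, namely $fhe$, which can be reduced either by first applying $fh\mapsto hf+2f$ or by first applying $he\mapsto eh+2e$. I would compute both resulting normal forms and confirm that they coincide; this single identity is equivalent to the consistency of the three relations, i.e.\ the Jacobi identity for $\sl$, and is the one genuine computation in the argument. Because the unique ambiguity resolves, the Diamond Lemma gives that the reduced words $e^ih^jf^k$ form an $\F$-basis of $\U$, yielding both spanning and linear independence.

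The main obstacle is precisely this confluence check on $fhe$; termination and spanning are routine bookkeeping. As an alternative route to independence, one could instead equip the symmetric algebra $S(\sl)=\F[\bar e,\bar h,\bar f]$ with the standard $U(\sl)$-module structure and note that $e^ih^jf^k$ acts on $1$ with leading symbol $\bar e^i\bar h^j\bar f^k$, which forces independence; but the Diamond Lemma route avoids writing down those operators explicitly and disposes of both halves of the statement uniformly.
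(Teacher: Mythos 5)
The paper gives no proof of this lemma at all---it is simply cited from the literature (Mazorchuk, Theorem 2.13)---so your argument is necessarily a different route, and it is a correct one. With $e<h<f$, your rules are exactly the relations \eqref{eq:Udef} solved for the out-of-order products $fe$, $he$, $fh$; the irreducible words are precisely $e^ih^jf^k$; the only ambiguity is the overlap $fhe$; and that ambiguity does resolve: reducing $fh$ first gives $hfe+2fe$, which rewrites to $ehf+4ef-h^2-2h$, while reducing $he$ first gives $feh+2fe$, which rewrites to the same element, so Bergman's lemma delivers spanning and independence simultaneously. Relative to the textbook treatments the paper defers to (typically: straightening for the span, plus a separately constructed representation or the general PBW theorem for independence), your route is self-contained, treats both halves uniformly, and isolates the entire content of the theorem in one computation, which is morally the Jacobi identity for the triple $e,h,f$. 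Two points to tighten when writing it up. First, the lexicographic order on (degree, number of inversions) is not itself a semigroup partial order on words: two words of equal degree but different letter multisets can compare one way while their products with a common word compare the other way. What your multiset remark actually justifies, and what Bergman's lemma needs, is the partial order generated by ``lower degree, or same letter multiset with fewer inversions''; alternatively, degree-then-lexicographic order with $e<h<f$ is compatible with multiplication outright and dominates all three rules. Second, the aside about the symmetric algebra is not innocent: there is no ``standard'' $\U$-module structure on $S(\sl)$ available before PBW is proved---constructing an action in which $e^ih^jf^k$ has leading symbol $\bar e^i\bar h^j\bar f^k$ is exactly the hard step of the classical Birkhoff--Witt argument---so that remark should either be dropped or replaced by an explicit faithful family of representations of $\sl$.
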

 
The basis in Lemma \ref{lemma:UPBW} is referred to as a  \textit{Poincar\'{e}--Birkhoff--Witt basis} or \textit{PBW-basis} for short.  We can use this PBW-basis to form a $\Z$-grading of $\U$ in the following way.
For each integer $n$, let $U_n$ denote the $\F$-subspace of $\U$ spanned by
\begin{equation*}
e^i h^j f^k
\qquad \quad
\hbox{$i,j,k\in \N$ with $k-i=n$}.
\end{equation*}
The sequence $\{ U_n\}_{n\in\Z}$ is a $\Z$-grading of $\U$. With respect to this $\Z$-grading, the elements $e,h,f$ are homogeneous with degree $-1, 0, 1$ respectively.  By (\ref{eq:Lambdadef}), $\Lambda$ is homogeneous with degree 0.

By construction, for $n\in\Z$, $U_n$ has a basis consisting of the elements $e^i h^j f^k$ such that $k-i=n$.  Shortly we will display a second basis for $U_n$ which will aid us in proving the remaining theorems.  We will need the following result.

\begin{lem}\label{lemma:ef^n}
For $i\in\N$,
\begin{equation*}
e^if^i=\prod_{j=1}^i\left(\Lambda -\frac{\left(h-2j+2\right)\left(h-2j\right)}{4}\right).
\end{equation*}
\end{lem}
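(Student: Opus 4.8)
The statement to prove is the identity
\begin{equation*}
e^i f^i = \prod_{j=1}^i\left(\Lambda - \frac{(h-2j+2)(h-2j)}{4}\right),
\end{equation*}
for all $i \in \N$. The plan is to proceed by induction on $i$. The base case $i=0$ holds since both sides equal $1$ (the empty product on the right and $e^0 f^0 = 1$ on the left). For the inductive step, I would assume the formula holds for some fixed $i$ and compute $e^{i+1} f^{i+1} = e \cdot (e^i f^i) \cdot f$, then use the inductive hypothesis to rewrite the middle factor and manipulate the result into the product with one additional factor.

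The key computational tool is the definition of $\Lambda$ in (\ref{eq:Lambdadef}), namely $\Lambda = ef + h(h-2)/4$, together with the commutation relations (\ref{eq:Udef}). The first step I would carry out is to understand how $e$ and $f$ interact with the central element $\Lambda$ and with functions of $h$. Since $\Lambda$ is central, it commutes with everything, so the only subtlety is commuting $e$ and $f$ past powers of $h$. From $he = eh + 2e$ we get $h^j e = e(h+2)^j$, and more generally any polynomial $p(h)$ satisfies $p(h)e = e\, p(h+2)$ and $f\, p(h) = p(h+2) f$ (equivalently $p(h) f = f\, p(h-2)$). I would record these shift rules first, since the factors $(h-2j+2)(h-2j)/4$ are exactly the kind of $h$-polynomials that get shifted when conjugated by $e$ or $f$.

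The heart of the argument is then to show that conjugating the inductive-hypothesis product by $e$ on the left and $f$ on the right produces the correctly shifted product, and that the extra factor of $ef$ combines with the central $\Lambda$ and the $h$-polynomial to yield the new factor. Concretely, writing $P_i = \prod_{j=1}^i(\Lambda - (h-2j+2)(h-2j)/4)$, I would compute $e\, P_i\, f$. Using centrality of $\Lambda$ and the shift rules, $e\,P_i = P_i'\, e$ where $P_i'$ is $P_i$ with $h$ replaced by $h+2$; then $e \cdot f = \Lambda - h(h-2)/4$ by the definition of $\Lambda$. Tracking how the index shift in $P_i'$ realigns the factors and how the newly appearing factor $\Lambda - h(h-2)/4$ attaches, one finds that the telescoping produces precisely $P_{i+1}$. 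The main obstacle will be bookkeeping the index shifts correctly: one must verify that replacing $h$ by $h+2$ in the factor indexed by $j$ turns it into the factor indexed by $j-1$, so that the product over $j=1,\dots,i$ becomes the product over $j=0,\dots,i-1$, and that the $j=0$ term is exactly $\Lambda - h(h-2)/4 = ef$, the factor contributed by moving $e$ and $f$ together. Once this alignment is confirmed, the product $P_{i+1}$ over $j=1,\dots,i+1$ emerges after accounting for the final $f$ on the right, and the induction closes.
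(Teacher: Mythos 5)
Your overall strategy---induction on $i$, writing $e^{i+1}f^{i+1}=e\,(e^if^i)\,f$, invoking the inductive hypothesis, and combining the shift rules with $ef=\Lambda-h(h-2)/4$---is sound, and it is essentially the paper's argument (the paper contracts the inner pair, $e^if^i=e^{i-1}(ef)f^{i-1}=e^{i-1}\bigl(\Lambda-\tfrac{h(h-2)}{4}\bigr)f^{i-1}$, and pushes the resulting polynomial in $h$ leftward through $e^{i-1}$; same tools, same idea). However, the bookkeeping you describe at the heart of your induction is wrong, and as stated the telescoping does not close. From $[h,e]=2e$ in (\ref{eq:Udef}) one has $eh=(h-2)e$, hence $e\,p(h)=p(h-2)\,e$; so in $e\,P_i$ the element $e$ moves to the right at the cost of replacing $h$ by $h-2$, not $h+2$ (your own recorded rule $p(h)e=e\,p(h+2)$ says exactly this when read in the needed direction). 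Consequently the factor indexed by $j$ becomes the factor indexed by $j+1$, and the product over $j=1,\dots,i$ becomes the product over $j=2,\dots,i+1$, not over $j=0,\dots,i-1$.

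Moreover, your claimed anchor ``the $j=0$ term is exactly $\Lambda-h(h-2)/4=ef$'' is false: the factor at $j=0$ would be $\Lambda-(h+2)h/4$, which equals $fe$, not $ef$; it is the $j=1$ factor that equals $\Lambda-h(h-2)/4=ef$. If one follows your alignment literally, $e\,P_i\,f$ comes out as $\bigl(\prod_{j=0}^{i-1}(\Lambda-\tfrac{(h-2j+2)(h-2j)}{4})\bigr)\cdot ef$, which for $i\geq 2$ contains the $j=1$ factor twice, contains the spurious $j=0$ factor, and is missing the factors $j=i$ and $j=i+1$; this is not $P_{i+1}$ (the case $i=1$ fails similarly). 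The repair is a one-line correction: with the correct shift, $e\,P_i\,f=\bigl(\prod_{j=2}^{i+1}(\Lambda-\tfrac{(h-2j+2)(h-2j)}{4})\bigr)\bigl(\Lambda-\tfrac{h(h-2)}{4}\bigr)=P_{i+1}$, since the newly created factor $ef$ is precisely the missing $j=1$ factor and all factors commute ($\Lambda$ is central and the remaining factors are polynomials in $h$). With that fix your argument goes through and coincides, up to where the $ef$ pair is contracted, with the proof in the paper; but as written, its key step fails.
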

\begin{proof}
Assume $i\geq 1$; otherwise the result is trivial.
It follows from (\ref{eq:Udef}) that $eh=(h-2)e$ and thus $e^{i-1}h=(h-2(i-1))e^{i-1}$.
Using these facts along with  (\ref{eq:Lambdadef}), we see that
\begin{align*}
e^if^i&=e^{i-1}eff^{i-1}\\
&=e^{i-1} \left(\Lambda-\frac{h(h-2)}{4}\right) f^{i-1}\\
&= \left(\Lambda-\frac{(h-2i+2)(h-2i)}{4}\right) e^{i-1}f^{i-1}
\end{align*}
The result follows from these comments and induction on $i$.
\end{proof}

We are now ready to give a second basis for the $\F$-vector space $U_n$.

\begin{lem}\label{lemma:Ubasis_Chev}
For each $n\in\N$, both of the following hold:
\begin{enumerate}
\item the $\F$-vector space $U_n$ has a basis
\begin{equation*}
\Lambda^i h^j f^n
\qquad \quad
i,j\in \N,
\end{equation*}
  
\item the $\F$-vector space $U_{-n}$ has a basis
\begin{equation*}
\Lambda^i h^j e^n
\qquad \quad
 i,j\in \N.
\end{equation*}
\end{enumerate}
\end{lem}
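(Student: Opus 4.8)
The plan is to realize each graded piece $U_n$ as the image of a polynomial ring under right multiplication by a power of $f$ (resp.\ $e$) and to check that this map is an isomorphism carrying the monomial basis to the asserted basis. I treat (i) first; part (ii) will follow by symmetry. Fix $n\in\N$, let $s,t$ be commuting indeterminates, and define an $\F$-linear map $\phi_n\colon\F[s,t]\to\U$ by $\phi_n(p(s,t))=p(\Lambda,h)\,f^n$. This is well defined because $\Lambda$ is central and hence commutes with $h$, and its image lies in $U_n$ since $\Lambda$ and $h$ are homogeneous of degree $0$ while $f$ is homogeneous of degree $1$. Statement (i) is precisely the claim that $\phi_n$ sends the monomial basis $\{s^it^j\}_{i,j\in\N}$ of $\F[s,t]$ to a basis of $U_n$, i.e.\ that $\phi_n$ restricts to an isomorphism $\F[s,t]\xrightarrow{\sim}U_n$.

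The key computation is to rewrite the PBW basis of $U_n$ in the form $\phi_n(\text{polynomial})$. From $he=eh+2e$ one gets $e^ih^j=(h-2i)^je^i$, and combining this with Lemma \ref{lemma:ef^n} gives
\[
e^ih^jf^{i+n}=(h-2i)^j\,e^if^i\,f^n=g_{i,j}(\Lambda,h)\,f^n=\phi_n\bigl(g_{i,j}(s,t)\bigr),
\]
where $g_{i,j}(s,t)=(t-2i)^j\prod_{m=1}^{i}\bigl(s-\tfrac{(t-2m+2)(t-2m)}{4}\bigr)$. Since the elements $e^ih^jf^{i+n}$ (for $i,j\in\N$) are exactly the PBW basis of $U_n$, this already shows that $\phi_n$ is surjective onto $U_n$.

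For injectivity I would analyse the leading terms of $g_{i,j}$. Expanding the product, the coefficient of $s^i$ equals $1$ and is free of $t$, so multiplying by $(t-2i)^j=t^j+(\text{lower order in }t)$ yields $g_{i,j}(s,t)=s^it^j+(\text{monomials }s^{i'}t^{j'}\text{ with }i'<i,\text{ or }i'=i\text{ and }j'<j)$. Thus the family $\{g_{i,j}\}$ is unitriangular with respect to the lexicographic well-order on $(i,j)$ (first coordinate primary) and is therefore itself a basis of $\F[s,t]$. As $\phi_n$ carries this basis to the linearly independent PBW family $\{e^ih^jf^{i+n}\}$, it sends a basis to a linearly independent set and is hence injective. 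Combined with the surjectivity above, this shows $\phi_n$ is an isomorphism, so it maps $\{s^it^j\}$ to a basis $\{\Lambda^ih^jf^n\}$ of $U_n$, proving (i).

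Finally, for (ii) I would transport (i) through the algebra automorphism $\sigma$ of $\U$ determined by $e\mapsto f$, $f\mapsto e$, $h\mapsto -h$: a quick check against the defining relations (\ref{eq:Udef}) shows $\sigma$ is well defined, and using $fe=ef-h$ one finds $\sigma(\Lambda)=\Lambda$. Since $\sigma$ reverses the grading, it restricts to an isomorphism $U_n\xrightarrow{\sim}U_{-n}$, and $\sigma(\Lambda^ih^jf^n)=(-1)^j\Lambda^ih^je^n$; applying $\sigma$ to the basis from (i) therefore yields the asserted basis of $U_{-n}$. I expect the main obstacle to be the leading-term bookkeeping for $g_{i,j}$: the whole argument hinges on the transition to $\{g_{i,j}\}$ being genuinely unitriangular, after which surjectivity together with the basis-to-independent-set observation forces $\phi_n$ to be an isomorphism and the rest is formal.
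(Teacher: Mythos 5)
Your proof is correct and is essentially the paper's argument: both hinge on Lemma \ref{lemma:ef^n} together with a unitriangular change of basis against the PBW basis of $U_n$ --- the paper expands $\Lambda^i h^j f^n$ triangularly in the spaces $e^k H f^{n+k}$ (with $H$ the subalgebra generated by $h$), while you run the same triangle in the opposite direction by writing each PBW element $e^i h^j f^{i+n}$ as $g_{i,j}(\Lambda,h)f^n$ and checking that $\{g_{i,j}\}$ is lex-unitriangular in $\F[s,t]$. The only real departure is minor: you deduce (ii) from (i) via the automorphism $e\mapsto f$, $f\mapsto e$, $h\mapsto -h$ (which fixes $\Lambda$ and reverses the grading), whereas the paper simply notes that part (ii) can be shown similarly.
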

\begin{proof}
We first show (i). Let $H$ denote the subalgebra of $\U$ generated by $h$.  Observe that the elements $\{h^j\}_{j\in\N}$ are linearly independent by 
Lemma \ref{lemma:UPBW} and so it follows that for each $i\in\N$, $\{(h+2i)^j\}_{j\in\N}$ is a basis for $H$.  
By Lemma \ref{lemma:UPBW}, the sum $U_n=\sum_{k =0}^\infty e^k  H f^{n+k }$ is direct.  
We have $e H=He$ since $eh=(h-2)e$.  Similarly, we have that $fH=Hf$.  
Pick an integer $i\in\N$. By Lemma \ref{lemma:ef^n} and induction on $i$, we find that $\Lambda^{i}\in\sum_{k  =0}^{i} e^k   H f^k  $ and $\Lambda^{i}-e^{i}f^{i}\in \sum_{k  =0}^{i-1} e^k   H f^k  $. 
 It follows from these comments that  
  for the above $i$ and all $j\in \N$,  
 both
  $\Lambda^{i}h^jf^n\in\sum_{k  =0}^{i} e^k   H f^{n+k  }$ and
\begin{equation*}
\Lambda^{i}h^jf^n-   e^{i} (h+2i)^jf^{n+i}        \in\sum_{k  =0}^{i-1} e^k   H f^{n+k  }.
\end{equation*}
Observe that since $\{e^{i} h^j f^{n+i} |i,j\in\N\}$ is a basis for $U_n$, $\{e^{i} (h+2i)^j f^{n+i} |i,j\in\N\}$ is also a basis for $U_n$.
The result follows from these comments.

Part (ii) can be similarly shown.
\end{proof}

We now consider the $\Z$-grading $\{U_n\}_{n\in\Z}$ from an alternative point of view.    
For notational convenience, we let
\begin{gather}
\nu_x=
\frac{f}{2},
\qquad
\quad
\nu_z=
\frac{e}{2}.\label{eq:nu}
\end{gather}
Observe that it follows from \eqref{eq:isowhat} that 
\begin{gather}
\nu_x=-\frac{1}{2}(x+y),
\qquad
\quad
\nu_z=\frac{1}{2}(y+z).\label{eq:nu2}
\end{gather} 
 Recall from (\ref{eq:Uequit}) that $y=h/2$.
Note that $\nu_x,y,\nu_z$ are homogeneous with degree $1, 0,-1$ respectively. 
Further observe that $\nu_x,y,\nu_z$ form a generating set for $U(\mathfrak{sl}_2)$ and that 
\begin{equation}
\qquad\qquad
[\nu_x,y]=\nu_x,\qquad\qquad
[y,\nu_z]=\nu_z,\qquad\qquad
[\nu_z,\nu_x]=\frac{y}{2}.\label{eq:ynu}
\end{equation}
We remark that the Casimir element $\Lambda$ can be expressed in the following ways:
\begin{equation}
\Lambda=4\nu_x\nu_z+y(y+1),\qquad\qquad\Lambda=4\nu_z\nu_x+y(y-1).\label{eq:Lambdanu}
\end{equation}
From this we see that both $\nu_x\nu_z$, $\nu_z\nu_x$ are homogeneous with degree 0.\\

We now give a reformulation of Lemma \ref{lemma:Ubasis_Chev} in terms of $\Lambda, y, \nu_x,\nu_z$.

\begin{lem}\label{lemma:Ubasis}
For each $n\in\N$, both of the following hold:
\begin{enumerate}
\item the $\F$-vector space $U_n$ has a basis
\begin{equation*}
\Lambda^i y^j \nu_x^n
\qquad \quad
i,j\in \N,
\end{equation*}

\item the $\F$-vector space $U_{-n}$ has a basis
\begin{equation*}
\Lambda^i y^j \nu_z^n
\qquad \quad
i,j\in \N.
\end{equation*}
\end{enumerate}
\end{lem}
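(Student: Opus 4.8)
The plan is to deduce this lemma directly from Lemma \ref{lemma:Ubasis_Chev} by a simple rescaling, exploiting the fact that the generators $\nu_x, y, \nu_z$ differ from $f, h, e$ only by the invertible scalar factor $1/2$. Recall from \eqref{eq:nu} and \eqref{eq:Uequit} that $\nu_x = f/2$, $y = h/2$, and $\nu_z = e/2$. Since these scalars are central in $\FU$ and commute past $\Lambda$, for all $i,j\in\N$ we have
\begin{equation*}
\Lambda^i y^j \nu_x^n = \frac{1}{2^{j+n}}\,\Lambda^i h^j f^n,
\qquad
\Lambda^i y^j \nu_z^n = \frac{1}{2^{j+n}}\,\Lambda^i h^j e^n.
\end{equation*}

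The key observation is that because ${\rm char\,}\F \neq 2$, the element $2$ is invertible in $\F$, so each scalar $2^{-(j+n)}$ is nonzero. Thus every element $\Lambda^i y^j \nu_x^n$ is a nonzero scalar multiple of the corresponding basis element $\Lambda^i h^j f^n$ from Lemma \ref{lemma:Ubasis_Chev}(i), and the passage from the old indexing set to the new one is nothing more than a diagonal rescaling of the basis. Since multiplying the vectors of a basis by nonzero scalars yields another basis of the same space, part (i) follows at once from Lemma \ref{lemma:Ubasis_Chev}(i). I would then observe that part (ii) follows identically from Lemma \ref{lemma:Ubasis_Chev}(ii), using $\nu_z = e/2$ in place of $\nu_x = f/2$.

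I expect no genuine obstacle here: the entire content of the lemma is the bookkeeping identities $\nu_x = f/2$, $y = h/2$, $\nu_z = e/2$ together with the standing hypothesis ${\rm char\,}\F \neq 2$, which guarantees that these substitutions are invertible. The only point meriting a moment's care is the (elementary) fact that rescaling each vector of a basis by a nonzero scalar preserves the basis property, which is immediate from the corresponding statement for linear independence and spanning.
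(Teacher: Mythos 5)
Your proposal is correct and matches the paper's own proof, which simply states that the lemma is a reformulation of Lemma \ref{lemma:Ubasis_Chev} via the substitutions $\nu_x = f/2$, $y = h/2$, $\nu_z = e/2$ from (\ref{eq:Uequit}) and (\ref{eq:nu}). Your write-up just makes explicit the rescaling bookkeeping (and the role of ${\rm char\,}\F \neq 2$ in making $2$ invertible) that the paper leaves implicit.
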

\begin{proof}
This is a reformulation of Lemma \ref{lemma:Ubasis_Chev} using  (\ref{eq:Uequit}) and (\ref{eq:nu}).
\end{proof}

In the coming sections, it will be useful to consider the homogeneous components of various elements of $\U$.  With this in mind, we define the following family of maps for future use.

\begin{defn}\label{def:proj}
For $n\in\Z$, let $\pi_n:\U\to\U$ denote the $\F$-linear map such that $\left(\pi_n-I\right)U_n=0$ and $\pi_n U_m=0$ for $m\neq n$.  Thus, $\pi_n$ is the projection map from $\U$ onto $U_n$.  For $u\in\U$, we call $\pi_n(u)$ the homogenous component of $u$ of degree $n$.  
\end{defn}

\section{A $\Z$-grading of $\F[a,b,c]\otimes\U$}\label{section:FUgrad}

Recall from the Introduction that $a,b,c$ are three mutually commuting indeterminates and that $\F[a,b,c]$ denotes the $\F$-algebra of polynomials in $a,b,c$ that have all coefficients in $\F$. 
In Section \ref{section:Ugrad} we recalled a PBW-basis of $\U$ and the associated $\Z$-grading of $\U$.  
In this section, we extend the $\Z$-grading of $\U$ to a $\Z$-grading of $\FU$ and discuss the homogeneous components of certain elements of $\FU$.

It follows
from Lemma \ref{lemma:Ubasis} 
 that $\Lambda, y, \nu_x,\nu_z$ form a generating set for the $\F$-algebra $\U$.  Therefore the following elements form a generating set for the $\F$-algebra $\FU$:
\begin{equation}
1\otimes\Lambda,\qquad 1\otimes y, \qquad 1\otimes \nu_x, \qquad 1\otimes \nu_z, \qquad a\otimes 1,\qquad b\otimes 1,\qquad c \otimes 1.\label{eq:FUgens}
\end{equation}
The $\Z$-grading of $\U$ given in Section \ref{section:Ugrad} can be extended to a $\Z$-grading of $\FU$ whose homogeneous components are described as follows.  For $n\in\Z$, the $n$-homogeneous component of $\FU$ is $\F[a,b,c]\otimes U_n$.  With respect to this $\Z$-grading, the generators of $\FU$ given in \eqref{eq:FUgens} have the following degrees: 
\begin{table}[H]
\small
\centering  
\extrarowheight=3pt
\begin{tabular}{c|ccccccc}
$v$ &$1\otimes\Lambda $&$ 1\otimes y $&$  1\otimes \nu_x $&$  1\otimes \nu_z $&$  a \otimes 1 $&$  b \otimes 1 $&$  c \otimes 1$\\
\hline\hline
degree & 0&0&1&-1&0&0&0
\end{tabular}
\end{table}

\begin{lem}\label{lemma:FUbasis}
For each $n\in\N$, both of the following hold:
\begin{enumerate}
\item the $\F$-vector space $\F[a,b,c]\otimes U_n$ has a basis
\begin{equation*}
a^r b^s c^t\otimes \Lambda^i y^j \nu_x^n
\qquad \quad
i,j,r,s,t\in \N,
\end{equation*}

\item the $\F$-vector space $\F[a,b,c]\otimes U_{-n}$ has a basis
\begin{equation*}
a^r b^s c^t\otimes \Lambda^i y^j \nu_z^n
\qquad \quad
i,j,r,s,t\in \N.
\end{equation*}
\end{enumerate}
\end{lem}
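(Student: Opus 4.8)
The plan is to obtain this lemma as an immediate consequence of Lemma \ref{lemma:Ubasis} together with the standard fact that a tensor product of bases is a basis of the tensor product. First I would recall that the monomials $a^r b^s c^t$ for $r,s,t\in\N$ form a basis of $\F[a,b,c]$ as an $\F$-vector space, since $\F[a,b,c]$ is by definition the algebra of polynomials in the three commuting indeterminates $a,b,c$ with coefficients in $\F$. Next, by Lemma \ref{lemma:Ubasis}(i), the elements $\Lambda^i y^j \nu_x^n$ for $i,j\in\N$ form a basis of the $\F$-vector space $U_n$, and by Lemma \ref{lemma:Ubasis}(ii), the elements $\Lambda^i y^j \nu_z^n$ for $i,j\in\N$ form a basis of $U_{-n}$.

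Next I would invoke the elementary fact that if $\{u_\alpha\}_{\alpha\in I}$ is a basis of an $\F$-vector space $V$ and $\{v_\beta\}_{\beta\in J}$ is a basis of an $\F$-vector space $W$, then $\{u_\alpha\otimes v_\beta\}_{(\alpha,\beta)\in I\times J}$ is a basis of $V\otimes_\F W$. Applying this with $V=\F[a,b,c]$ and $W=U_n$, and using the two bases recalled above, shows that the elements $a^r b^s c^t\otimes \Lambda^i y^j \nu_x^n$ for $i,j,r,s,t\in\N$ form a basis of $\F[a,b,c]\otimes U_n$. As noted in the discussion preceding the lemma, $\F[a,b,c]\otimes U_n$ is precisely the $n$-homogeneous component of the $\Z$-grading of $\FU$, so this proves (i). Part (ii) is proved the same way, using Lemma \ref{lemma:Ubasis}(ii) and the basis $a^r b^s c^t\otimes \Lambda^i y^j \nu_z^n$ of $\F[a,b,c]\otimes U_{-n}$.

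Since both ingredients are standard, I do not expect any genuine obstacle here; the result is a bookkeeping reformulation that repackages Lemma \ref{lemma:Ubasis} together with the polynomial basis of $\F[a,b,c]$. The only point requiring care is to match the index ranges $i,j,r,s,t\in\N$ correctly on both sides and to be explicit that the basis of the tensor product is indexed by the product of the two index sets. This lemma is preparatory: its role is to supply, in terms of $\Lambda,y,\nu_x,\nu_z$, an explicit homogeneous basis of each graded component of $\FU$, which will then be used in the grading arguments of the later sections.
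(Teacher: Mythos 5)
Your proposal is correct and follows exactly the same route as the paper, which likewise derives the result from Lemma \ref{lemma:Ubasis} together with the fact that the monomials $a^r b^s c^t$ ($r,s,t\in\N$) form a basis of $\F[a,b,c]$. You merely make explicit the standard tensor-product-of-bases fact that the paper leaves implicit.
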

\begin{proof}
The result follows from Lemma \ref{lemma:Ubasis} and the fact that $\{a^r b^s c^t\ |\ r,s,t\in\N\}$ is a basis for $\F[a,b,c]$.
\end{proof}

\begin{cor}\label{cor:extbasis}
For each $n\in\N$, both of the following hold:
\begin{enumerate}
\item the $\F[a,b,c]$-module $\F[a,b,c]\otimes U_n$ has a basis
\begin{equation*}
1\otimes \Lambda^i y^j \nu_x^n
\qquad \quad
i,j\in \N,
\end{equation*}

\item the $\F[a,b,c]$-module $\F[a,b,c]\otimes U_{-n}$ has a basis
\begin{equation*}
1\otimes\Lambda^i y^j \nu_z^n
\qquad \quad
i,j\in \N.
\end{equation*}
\end{enumerate}
\end{cor}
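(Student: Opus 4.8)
The plan is to derive Corollary~\ref{cor:extbasis} directly from Lemma~\ref{lemma:FUbasis} by viewing the displayed $\F$-vector space basis as an $\F[a,b,c]$-module basis under the obvious regrouping. The key observation is that $\F[a,b,c]\otimes U_n$ is naturally an $\F[a,b,c]$-module, where a polynomial $p\in\F[a,b,c]$ acts by $p\cdot(q\otimes u)=(pq)\otimes u$. Under this action, the element $a^rb^sc^t\otimes\Lambda^i y^j\nu_x^n$ appearing in Lemma~\ref{lemma:FUbasis}(i) is precisely $(a^rb^sc^t\otimes 1)\cdot(1\otimes\Lambda^i y^j\nu_x^n)$, that is, $a^rb^sc^t$ times the proposed module-basis element $1\otimes\Lambda^i y^j\nu_x^n$.

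First I would prove part~(i). I would show the proposed family spans and is $\F[a,b,c]$-linearly independent. For spanning, any element of $\F[a,b,c]\otimes U_n$ is an $\F$-linear combination of the vectors $a^rb^sc^t\otimes\Lambda^i y^j\nu_x^n$ by Lemma~\ref{lemma:FUbasis}(i); collecting the scalar $a^rb^sc^t$ into the module coefficient rewrites this as an $\F[a,b,c]$-linear combination of the elements $1\otimes\Lambda^i y^j\nu_x^n$, so these generate the module. For linear independence over $\F[a,b,c]$, suppose
\begin{equation*}
\sum_{i,j}p_{ij}\cdot(1\otimes\Lambda^i y^j\nu_x^n)=0
\end{equation*}
with $p_{ij}\in\F[a,b,c]$ and only finitely many nonzero. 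Writing each $p_{ij}=\sum_{r,s,t}c^{(ij)}_{rst}\,a^rb^sc^t$ with $c^{(ij)}_{rst}\in\F$ and expanding, the left side becomes an $\F$-linear combination of the vectors $a^rb^sc^t\otimes\Lambda^i y^j\nu_x^n$. Since these form an $\F$-basis by Lemma~\ref{lemma:FUbasis}(i), every coefficient $c^{(ij)}_{rst}$ must vanish, forcing each $p_{ij}=0$. Thus the elements $1\otimes\Lambda^i y^j\nu_x^n$ for $i,j\in\N$ are an $\F[a,b,c]$-basis.

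Part~(ii) follows by the identical argument with $\nu_z$ in place of $\nu_x$ and $U_{-n}$ in place of $U_n$, invoking Lemma~\ref{lemma:FUbasis}(ii). I do not anticipate any genuine obstacle here: the corollary is essentially a bookkeeping translation between an $\F$-basis indexed by five parameters and an $\F[a,b,c]$-basis indexed by the remaining two, and the only point requiring care is confirming that passing scalars from $\F[a,b,c]$ between the two tensor factors is exactly the module structure being used. The finiteness of all sums involved is automatic, so no convergence or well-definedness issues arise.
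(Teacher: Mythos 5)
Your proof is correct and matches the paper's approach: the paper states this as an immediate corollary of Lemma \ref{lemma:FUbasis} (offering no separate proof), and your argument — regrouping the $\F$-basis $a^rb^sc^t\otimes\Lambda^i y^j\nu_x^n$ into $\F[a,b,c]$-multiples of $1\otimes\Lambda^i y^j\nu_x^n$, checking spanning and independence by expanding polynomial coefficients into monomials — is precisely the bookkeeping the paper leaves implicit.
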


Shortly we will describe the homogeneous components of $A^\natural$, $B^\natural$, $C^\natural$, $D^\natural$, $\alpha^\natural$, $\beta^\natural$, $\gamma^\natural$, $\delta^\natural$.  To aid us in this task, we first express the elements $A^\natural, B^\natural$  
  in terms of $\nu_x,\nu_z$ rather than $x,z$.

\begin{lem}\label{lemma:ABCnu}
	The following equations hold in $\FU$:
		\begin{eqnarray}
			A^\natural &=& (1\otimes y+a\otimes 1)(1\otimes y+(a+1)\otimes 1)+2\otimes y\nu_x+2\left(c+a-b+1\right)\otimes\nu_x,\label{eq:Anat}\\
			B^\natural &=& (1\otimes y-b\otimes 1)(1\otimes y-(b+1)\otimes 1)-2\otimes y\nu_z-2\left(a-b-c-1\right)\otimes \nu_z.\label{eq:Bnat} 
		\end{eqnarray}
\end{lem}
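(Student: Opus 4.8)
The plan is to reduce both identities to a single substitution followed by routine simplification. Recall from \eqref{eq:nu2} that $\nu_x=-\tfrac12(x+y)$ and $\nu_z=\tfrac12(y+z)$, which I invert to obtain $x=-2\nu_x-y$ and $z=2\nu_z-y$. Substituting $x=-2\nu_x-y$ into the defining expression \eqref{e:Anatural} for $A^\natural$ (and $z=2\nu_z-y$ into \eqref{e:Bnatural} for $B^\natural$) rewrites each of $A^\natural$, $B^\natural$ entirely in terms of $y$, $\nu_x$, $\nu_z$ and the indeterminates $a,b,c$.

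First, for $A^\natural$, I would expand the term $-1\otimes xy$. Writing $xy=(-2\nu_x-y)y=-2\nu_x y-y^2$ produces a factor $\nu_x y$ in the \emph{wrong} order; here I would invoke the commutation relation $[\nu_x,y]=\nu_x$ from \eqref{eq:ynu} to replace $\nu_x y$ by $y\nu_x+\nu_x$, so that $-1\otimes xy=2\otimes y\nu_x+2\otimes\nu_x+1\otimes y^2$. The analogous step for $B^\natural$ is easier: $yz=y(2\nu_z-y)=2y\nu_z-y^2$ already presents $y\nu_z$ in the correct order, so no reordering is required.

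Next I would gather the coefficients of the distinct terms $1\otimes y^2$, the linear-in-$y$ term, $\otimes y\nu_x$ (resp.\ $\otimes y\nu_z$), $\otimes\nu_x$ (resp.\ $\otimes\nu_z$), and $1\otimes 1$. For $A^\natural$ the $\otimes\nu_x$ coefficient collects to $2(c+a-b+1)$ and the $\otimes y\nu_x$ coefficient to $2$, matching the stated right-hand side; for $B^\natural$ the $\otimes\nu_z$ coefficient collects to $2(b+c-a+1)=-2(a-b-c-1)$ and the $\otimes y\nu_z$ coefficient to $-2$. The remaining $y$-only part of $A^\natural$ is $a(a+1)\otimes1+(2a+1)\otimes y+1\otimes y^2$, which I recognize as the expansion of $(1\otimes y+a\otimes1)(1\otimes y+(a+1)\otimes1)$; similarly, the $y$-only part of $B^\natural$ equals $(1\otimes y-b\otimes1)(1\otimes y-(b+1)\otimes1)$. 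Assembling these pieces yields \eqref{eq:Anat} and \eqref{eq:Bnat}.

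I do not anticipate any genuine obstacle: the computation is elementary and self-contained. The only point demanding care is the single reordering $\nu_x y=y\nu_x+\nu_x$ in the $A^\natural$ computation, which is responsible for the extra $2\otimes\nu_x$ that merges into the $\otimes\nu_x$ coefficient; overlooking it would produce the wrong constant. The bookkeeping of the several $a,b,c$-coefficients is the most error-prone part, but it is purely mechanical.
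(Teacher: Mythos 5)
Your proposal is correct and follows exactly the paper's approach: the paper's proof is the one-line instruction to use \eqref{eq:nu2} to rewrite $A^\natural$ and $B^\natural$ in terms of $\nu_x,\nu_z$, and your computation (including the one necessary reordering $\nu_x y=y\nu_x+\nu_x$ from \eqref{eq:ynu}, and the resulting coefficient $2(c+a-b+1)$) carries this out accurately.
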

\begin{proof}
 Use (\ref{eq:nu2}) to rewrite $A^\natural$ and $B^\natural$ in terms of $\nu_x,\nu_z$.
\end{proof}

Motivated by the above result, we define $R,L\in\FU$ by 
\begin{eqnarray}
R 
&=& 2\otimes y\nu_x+2\left(c+a-b+1\right)\otimes\nu_x,\label{eq:RLtheta1} 
\\
L 
&=& -2\otimes y\nu_z-2\left(a-b-c-1\right)\otimes \nu_z.\label{eq:RLtheta2} 
\end{eqnarray}
We additionally define
\begin{equation}
\theta = 1\otimes y-b\otimes 1,\qquad\qquad
\vartheta = 1\otimes y+a\otimes 1.\label{eq:RLtheta4}
\end{equation}

We now mention a few quick results concerning the elements $R,L,\theta,\vartheta$.

\begin{lem}\label{lemma:RLnonzero}
Each of $R,L,\theta,\vartheta$ is nonzero.  Moreover, the elements $R,L,\theta,\vartheta$ are homogeneous with degree $1, -1,0,0$ respectively.
\end{lem}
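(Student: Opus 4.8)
The plan is to verify the claimed degrees and nonvanishing directly from the explicit formulas \eqref{eq:RLtheta1}--\eqref{eq:RLtheta4}, using the $\Z$-grading of $\FU$ established in this section together with the basis information from Corollary \ref{cor:extbasis}. First I would address the degrees. Recall from the degree table in this section that $1\otimes y$ has degree $0$, $1\otimes\nu_x$ has degree $1$, $1\otimes\nu_z$ has degree $-1$, and $a\otimes 1, b\otimes 1, c\otimes 1$ all have degree $0$. Since $\mathcal A_i\cdot\mathcal A_j\subseteq\mathcal A_{i+j}$ in a $\Z$-graded algebra, the product $1\otimes y\nu_x=(1\otimes y)(1\otimes\nu_x)$ lies in degree $0+1=1$, and $(c+a-b+1)\otimes\nu_x$ also lies in degree $1$; hence $R$, being an $\F$-linear combination of degree-$1$ elements, is homogeneous of degree $1$. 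The identical argument gives that $L$ is homogeneous of degree $-1$ and that $\theta,\vartheta$ are homogeneous of degree $0$, since each is a combination of $1\otimes y$ and a scalar times $b\otimes 1$ or $a\otimes 1$, all of degree $0$.

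The substance of the lemma is the nonvanishing, which I would prove by exhibiting, for each element, a basis vector from Corollary \ref{cor:extbasis} (or Lemma \ref{lemma:FUbasis}) that occurs with nonzero coefficient. For $R=2\otimes y\nu_x+2(c+a-b+1)\otimes\nu_x$, I would rewrite the $\F[a,b,c]$-coefficients explicitly: the term $2\otimes y\nu_x$ contributes $2$ times the basis element $1\otimes y\,\nu_x=1\otimes \Lambda^0 y^1\nu_x^1$, and expanding $2(c+a-b+1)$ shows the coefficient of the basis element $1\otimes\nu_x=1\otimes\Lambda^0 y^0\nu_x^1$ is a nonzero polynomial. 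By Corollary \ref{cor:extbasis}(i), the elements $1\otimes\Lambda^i y^j\nu_x^1$ for $i,j\in\N$ form an $\F[a,b,c]$-basis of $\F[a,b,c]\otimes U_1$, so $R$ is a nonzero $\F[a,b,c]$-linear combination of distinct basis vectors and is therefore nonzero. The same reasoning applies to $L$ via Corollary \ref{cor:extbasis}(ii). For $\theta=1\otimes y-b\otimes 1$ and $\vartheta=1\otimes y+a\otimes 1$, I would use that $1\otimes y=1\otimes\Lambda^0 y^1\nu_x^0$ and $1\otimes 1=1\otimes\Lambda^0 y^0\nu_x^0$ are distinct basis vectors of $\F[a,b,c]\otimes U_0$ by Corollary \ref{cor:extbasis} with $n=0$; since the coefficient of $1\otimes y$ is $1\neq 0$, both $\theta$ and $\vartheta$ are nonzero.

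The only mild subtlety is that the coefficients here live in $\F[a,b,c]$ rather than in $\F$, so ``nonzero coefficient'' must mean a nonzero polynomial; the appeal to the $\F[a,b,c]$-module basis of Corollary \ref{cor:extbasis} is exactly what makes this rigorous, since a nonzero $\F[a,b,c]$-combination of basis elements cannot vanish. I do not anticipate a genuine obstacle: the argument is essentially bookkeeping against the explicit bases, and the presence of the coefficient $1$ on $1\otimes y$ in every case (and of $2$ on $1\otimes y\nu_x$ in $R$, $-2$ in $L$) makes the nonvanishing immediate. The degree claims are a one-line consequence of the grading axiom and the degree table.
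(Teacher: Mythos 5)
Your proof is correct and follows essentially the same route as the paper, whose proof simply cites Lemma \ref{lemma:FUbasis}: you read off the degrees from the grading table and verify nonvanishing by expanding each of $R,L,\theta,\vartheta$ against the basis of $\F[a,b,c]\otimes U_n$ (equivalently Corollary \ref{cor:extbasis}). Your expansion is just the detailed bookkeeping the paper leaves implicit, so there is nothing to add.
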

\begin{proof}
The result follows from Lemma \ref{lemma:FUbasis}.
\end{proof}

\begin{lem}\label{lemma:Ry}
The following equations hold in $\FU$:
\begin{equation}
[R,1\otimes y]=R,\qquad \qquad [1\otimes y,L]=L.
\end{equation}
\end{lem}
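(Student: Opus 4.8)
The plan is to reduce both equations to the commutation relations \eqref{eq:ynu} via the Leibniz rule $[PQ,S]=P[Q,S]+[P,S]Q$. Since $\FU=\F[a,b,c]\otimes\U$ is a tensor product over $\F$, any element $p\otimes 1$ with $p\in\F[a,b,c]$ commutes with $1\otimes y$; consequently the polynomial coefficients in $R$ and $L$ are simply carried along, and each computation reduces to a commutator of a $\U$-factor against $y$.

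First I would treat $R=2\otimes y\nu_x+2(c+a-b+1)\otimes\nu_x$ from \eqref{eq:RLtheta1} term by term. Using $[\nu_x,y]=\nu_x$ from \eqref{eq:ynu} together with the Leibniz rule, one finds $[y\nu_x,y]=y[\nu_x,y]=y\nu_x$ for the first term and $[\nu_x,y]=\nu_x$ for the second. Reattaching the coefficients gives
\[
[R,1\otimes y]=2\otimes y\nu_x+2(c+a-b+1)\otimes\nu_x=R.
\]
The computation for $L=-2\otimes y\nu_z-2(a-b-c-1)\otimes\nu_z$ from \eqref{eq:RLtheta2} is entirely parallel, now using $[y,\nu_z]=\nu_z$ from \eqref{eq:ynu}: here $[y,y\nu_z]=y[y,\nu_z]=y\nu_z$ and $[y,\nu_z]=\nu_z$, so that $[1\otimes y,L]=-2\otimes y\nu_z-2(a-b-c-1)\otimes\nu_z=L$.

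There is no substantive obstacle here; this is a routine verification. The only point needing minor care is the Leibniz bookkeeping, namely keeping the factor $y$ on the correct side of the product $y\nu_x$ (resp. $y\nu_z$) and tracking the sign in $L$. A slicker alternative, which I would mention for conceptual clarity, is to observe that $\mathrm{ad}(1\otimes y)=[1\otimes y,\,\cdot\,]$ acts on each homogeneous component $\F[a,b,c]\otimes U_n$ as the scalar $-n$: indeed every basis element $\Lambda^i y^j\nu_x^n$ of $U_n$ (from Lemma \ref{lemma:Ubasis}) satisfies $[y,\Lambda^i y^j\nu_x^n]=-n\,\Lambda^i y^j\nu_x^n$, since $\Lambda,y$ are $\mathrm{ad}(y)$-invariant and $[y,\nu_x]=-\nu_x$. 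Because $R$ and $L$ are homogeneous of degrees $1$ and $-1$ by Lemma \ref{lemma:RLnonzero}, this immediately yields $[R,1\otimes y]=R$ and $[1\otimes y,L]=L$.
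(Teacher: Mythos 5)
Your proof is correct and matches the paper's own argument, which simply cites the relations \eqref{eq:ynu}; your term-by-term Leibniz computation is exactly the verification the paper leaves implicit. The grading observation at the end (that $\mathrm{ad}(1\otimes y)$ acts as $-n$ on the degree-$n$ component, combined with Lemma \ref{lemma:RLnonzero}) is a nice conceptual alternative, but it is not needed.
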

\begin{proof}
	This result follows from (\ref{eq:ynu}). 
\end{proof}

\begin{cor}\label{cor:RLthetas}
The following equations hold in $\FU$:
\begin{equation*}
[R,\theta]=R,\qquad \qquad [R,\vartheta]=R,\qquad \qquad[\theta,L]=L,\qquad \qquad[\vartheta,L]=L.
\end{equation*}
Moreover, 
\begin{gather*}
\theta R=R\left(\theta-1\otimes 1\right) ,
\qquad 
\vartheta R=R\left(\vartheta-1\otimes 1\right) ,
\qquad 
\theta L= L (\theta+ 1\otimes 1),
\qquad 
\vartheta L= L (\vartheta+ 1\otimes 1).
\end{gather*}
\end{cor}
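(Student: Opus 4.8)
The plan is to derive the entire corollary formally from Lemma~\ref{lemma:Ry} by exploiting the centrality of the indeterminate generators. The key observation is that $a\otimes 1$ and $b\otimes 1$ are central in $\FU$: since $a,b,c$ are mutually commuting indeterminates and $1$ is the unit of $\U$, each of $a\otimes 1$ and $b\otimes 1$ commutes with every element of $\FU$, so in particular $[R,a\otimes 1]=[R,b\otimes 1]=0$, and likewise these scalars commute with $L$.

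First I would establish the four commutator identities. Writing $\theta=1\otimes y-b\otimes 1$ and using bilinearity of the commutator together with the centrality of $b\otimes 1$, we obtain $[R,\theta]=[R,1\otimes y]=R$ by Lemma~\ref{lemma:Ry}; the identity $[R,\vartheta]=R$ follows in the same way from $\vartheta=1\otimes y+a\otimes 1$ and the centrality of $a\otimes 1$. For the remaining two, the central scalar terms again drop out and Lemma~\ref{lemma:Ry} gives $[\theta,L]=[1\otimes y,L]=L$ and $[\vartheta,L]=[1\otimes y,L]=L$.

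Next I would convert these commutator relations into the stated normal-ordering identities. From $[R,\theta]=R$ we read off $R\theta-\theta R=R$, hence $\theta R=R\theta-R=R(\theta-1\otimes 1)$; the identity $\vartheta R=R(\vartheta-1\otimes 1)$ is obtained identically from $[R,\vartheta]=R$. Similarly $[\theta,L]=L$ yields $\theta L-L\theta=L$, so $\theta L=L\theta+L=L(\theta+1\otimes 1)$, and $\vartheta L=L(\vartheta+1\otimes 1)$ follows the same way from $[\vartheta,L]=L$.

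There is no substantive obstacle here, as the whole corollary is a formal consequence of Lemma~\ref{lemma:Ry} and the centrality of the indeterminate generators. The only point demanding any care is the sign bookkeeping when moving between the two commutator orders, namely keeping track that $[R,\theta]=R$ produces $\theta R=R\theta-R$ rather than $R\theta+R$, and correspondingly for the identities involving $L$.
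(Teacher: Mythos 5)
Your proposal is correct and is exactly the argument the paper intends: Corollary~\ref{cor:RLthetas} is stated without proof as an immediate consequence of Lemma~\ref{lemma:Ry}, and your derivation (bilinearity of the commutator, centrality of $a\otimes 1$ and $b\otimes 1$, then rewriting $[R,\theta]=R$ as $\theta R=R(\theta-1\otimes 1)$, etc.) fills in precisely those omitted steps. The sign bookkeeping you flag is handled correctly throughout.
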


\begin{lem}\label{lemma:RLandLR}
The following equations hold in $\FU$:
\begin{align}
RL&= (1\otimes y+(c+a-b+1)\otimes 1 ) (1\otimes y+(a-b-c)\otimes 1 ) (1\otimes y(y+1)-1\otimes \Lambda ),
\\
LR&=(1\otimes y+(c+a-b)\otimes 1 ) (1\otimes y+(a-b-c-1)\otimes 1 )(1\otimes y(y-1)-1\otimes \Lambda).
\end{align}
Moreover, each of  $RL, LR, [R,L]$ is homogeneous with degree 0.
\end{lem}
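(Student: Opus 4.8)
The plan is to compute $RL$ and $LR$ directly from the definitions \eqref{eq:RLtheta1} and \eqref{eq:RLtheta2} and then recognize the resulting expressions as the claimed products. Writing $p=c+a-b+1$ and $q=a-b-c-1$, so that $R=2\otimes y\nu_x+2p\otimes\nu_x$ and $L=-2\otimes y\nu_z-2q\otimes\nu_z$, I would first expand the product using the facts that multiplication in $\FU$ obeys $(f\otimes u)(g\otimes v)=fg\otimes uv$ and that $\F[a,b,c]$ is commutative. This collects the polynomial coefficients in front and reduces $RL$ to an $\F[a,b,c]$-linear combination of the four $\U$-monomials $y\nu_x y\nu_z$, $y\nu_x\nu_z$, $\nu_x y\nu_z$, $\nu_x\nu_z$, with $LR$ giving the analogous monomials in $\nu_z,\nu_x$.

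The key simplification is to rewrite each of these monomials in normal form. Using \eqref{eq:ynu} in the form $\nu_x y=(y+1)\nu_x$ and $\nu_z y=(y-1)\nu_z$, I would move every occurrence of $y$ to the left past $\nu_x$ and $\nu_z$, so that each monomial becomes a polynomial in $y$ times either $\nu_x\nu_z$ or $\nu_z\nu_x$. The two expressions for $\Lambda$ in \eqref{eq:Lambdanu} then let me substitute $\nu_x\nu_z=\tfrac14(\Lambda-y(y+1))$ and $\nu_z\nu_x=\tfrac14(\Lambda-y(y-1))$. After these substitutions every term lies in the commutative subalgebra of $\U$ generated by $y$ and $\Lambda$, so the remaining manipulation is ordinary commutative polynomial algebra.

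With the noncommutativity eliminated, the final step is to collect terms and spot the factorization. For $RL$ the common $\U$-factor is $\Lambda-y(y+1)$, and the accompanying polynomial factor $1\otimes y(y+1)+q\otimes y+p\otimes(y+1)+pq\otimes 1$ is exactly $(1\otimes y+p\otimes 1)(1\otimes(y+1)+q\otimes 1)$; absorbing the overall minus sign turns $\Lambda-y(y+1)$ into $1\otimes y(y+1)-1\otimes\Lambda$, and substituting $p,q$ and simplifying $1\otimes(y+1)+q\otimes 1=1\otimes y+(a-b-c)\otimes 1$ yields the stated formula for $RL$. The computation for $LR$ is entirely parallel, with $y(y+1)$ replaced by $y(y-1)$ and the two linear factors reordered (which is legitimate since they commute) to match the displayed order.

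Finally, homogeneity is immediate: by Lemma \ref{lemma:RLnonzero} the elements $R,L$ are homogeneous of degrees $1$ and $-1$, so $RL$ and $LR$ both lie in the degree-$0$ component $\F[a,b,c]\otimes U_0$ by the grading property $\mathcal A_i\cdot\mathcal A_j\subseteq\mathcal A_{i+j}$, and hence so does $[R,L]=RL-LR$. This is also visible directly from the explicit formulas, since $y$ and $\Lambda$ are homogeneous of degree $0$. I expect no genuine obstacle here; the only point requiring care is the bookkeeping when commuting $y$ past $\nu_x,\nu_z$ and tracking the overall sign, after which recognizing the two-factor factorization is straightforward.
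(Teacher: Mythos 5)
Your proposal is correct and follows essentially the same route as the paper, whose proof is just a citation of \eqref{eq:ynu}, \eqref{eq:Lambdanu}, \eqref{eq:RLtheta1}, and \eqref{eq:RLtheta2} --- exactly the normal-ordering and substitution you carry out explicitly. Your factored treatment of $R=2(1\otimes y+p\otimes 1)(1\otimes\nu_x)$ and $L=-2(1\otimes y+q\otimes 1)(1\otimes\nu_z)$, together with the grading argument for the degree-$0$ claim, fills in the details the paper leaves to the reader.
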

\begin{proof}
This result follows from \eqref{eq:ynu}, \eqref{eq:Lambdanu}, \eqref{eq:RLtheta1}, and \eqref{eq:RLtheta2}.
\end{proof}

\begin{lem}\label{lemma:comm}
The elements $\theta,\vartheta,RL,LR,[R,L]$ mutually commute.
\end{lem}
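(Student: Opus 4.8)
The plan is to show that $\theta,\vartheta,RL,LR,[R,L]$ mutually commute by exploiting the $\Z$-grading and the explicit descriptions already established. First I would observe that all five elements are homogeneous of degree $0$ by Lemma \ref{lemma:RLnonzero} and Lemma \ref{lemma:RLandLR}, so each lies in $\F[a,b,c]\otimes U_0$; this does not immediately give commutativity since $U_0$ is not commutative, but it tells us we may work entirely inside the degree-$0$ component and, more usefully, it lets us express everything in terms of the commuting-friendly generators $1\otimes\Lambda$, $1\otimes y$, and the scalars $a\otimes 1,b\otimes 1,c\otimes 1$.

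The key reduction is that $\theta$ and $\vartheta$, as defined in \eqref{eq:RLtheta4}, are polynomials in $1\otimes y$ and the central indeterminates, hence $\theta$ and $\vartheta$ commute with each other. Likewise, by Lemma \ref{lemma:RLandLR} the products $RL$ and $LR$ are each written purely as polynomials in $1\otimes y$ and $1\otimes\Lambda$ (together with scalars), and $1\otimes\Lambda$ is central in $\FU$ since $\Lambda$ is central in $\U$; therefore $\theta$, $\vartheta$, $RL$, and $LR$ are all polynomials in the two mutually commuting elements $1\otimes y$ and $1\otimes\Lambda$ (the variables $a,b,c$ being central), so these four already pairwise commute. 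The only element not yet manifestly a polynomial in $1\otimes y$ and $1\otimes\Lambda$ is $[R,L]$, so the crux is to handle it.

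For $[R,L]$ I would compute it directly as $RL-LR$ using the two explicit formulas in Lemma \ref{lemma:RLandLR}. Since both $RL$ and $LR$ are polynomials in $1\otimes y$ and $1\otimes\Lambda$, their difference $[R,L]$ is again such a polynomial. Once $[R,L]$ is exhibited as a polynomial in $1\otimes y$ and $1\otimes\Lambda$, it automatically commutes with $\theta,\vartheta,RL,LR$, each of which is also a polynomial in those same two commuting elements, and the proof is complete. In effect, the whole statement collapses to the single fact that all five elements lie in the commutative subalgebra of $\FU$ generated by $1\otimes y$ and $1\otimes\Lambda$ over $\F[a,b,c]$.

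The main obstacle I anticipate is purely bookkeeping in verifying that $RL-LR$ simplifies to a genuine polynomial in $1\otimes y$ and $1\otimes\Lambda$ with no leftover $\nu_x,\nu_z$ terms; the two degree-$0$ expressions in Lemma \ref{lemma:RLandLR} differ in their $y$-shifts (one has $y(y+1)$, the other $y(y-1)$, and the linear factors are shifted by $1\otimes 1$), so the subtraction must be carried out carefully, keeping track of how the shifted linear factors in $1\otimes y$ interact. This is routine algebra in the commutative ring $\F[a,b,c][1\otimes y,1\otimes\Lambda]$ rather than a conceptual difficulty, and no noncommutative manipulation is needed once the formulas of Lemma \ref{lemma:RLandLR} are invoked.
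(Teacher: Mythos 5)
Your proposal is correct and follows the same route as the paper: the paper's own (one-line) proof cites exactly the definition \eqref{eq:RLtheta4} and Lemma \ref{lemma:RLandLR}, i.e.\ the observation that all five elements are polynomials in the commuting elements $1\otimes y$ and $1\otimes\Lambda$ with coefficients in $\F[a,b,c]$. Your only anticipated obstacle is in fact vacuous -- since $RL$ and $LR$ are already exhibited as such polynomials, no simplification of $RL-LR$ is needed at all -- so the argument is complete as stated.
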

\begin{proof}
This result follows from  (\ref{eq:RLtheta4}) and Lemma \ref{lemma:RLandLR}. 
\end{proof}

We are now ready to describe the homogeneous components of $A^\natural$, $B^\natural$, $C^\natural$, $D^\natural$, $\alpha^\natural$, $\beta^\natural$, $\gamma^\natural$, $\delta^\natural$ and some related products.   
To aid us in displaying these results, we define the following family of maps. 
\begin{defn}\label{def:tproj}
For $n\in\Z$, let $\tilde\pi_n:\FU\to\FU$ denote the projection map from $\FU$ onto the $n$-homogeneous component of $\FU$.  That is, $\tilde\pi_n$ is the $\F$-linear map that acts as the identity on the $n$-homogeneous component of $\FU$ and acts as $0$ on all other homogeneous components of $\FU$.  Observe that for  $f\in\F[a,b,c]$ and $u\in\U$, $\tilde\pi_n\left(f\otimes u\right)=f\otimes \pi_n(u)$ and so $\tilde\pi_n=1\otimes\pi_n$. 
 \end{defn}

\begin{lem}\label{lemma:imagedegree0}
	Each of $\alpha^\natural$, $\beta^\natural$, $\gamma^\natural$, $\delta^\natural$ is homogeneous with degree 0.
\end{lem}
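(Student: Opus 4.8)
The plan is to invoke the explicit images recorded in Theorem~\ref{thm:natural} together with the fact that the $0$-homogeneous component $\F[a,b,c]\otimes U_0$ is a subalgebra of $\FU$. First I would note that, specializing the grading property $\mathcal A_i\cdot \mathcal A_j\subseteq \mathcal A_{i+j}$ to $i=j=0$, the subspace $\F[a,b,c]\otimes U_0$ is closed under multiplication; since it contains the unit, it is a subalgebra of $\FU$. It therefore suffices to exhibit $\alpha^\natural,\beta^\natural,\gamma^\natural,\delta^\natural$ as elements of this subalgebra.

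Next I would read off from (\ref{e:alphnatural})--(\ref{e:delnatural}) that each of these four elements is a sum of products of factors of just two types: the factor $1\otimes\Lambda$, and the scalar factors $a(a+1)\otimes 1$, $b(b+1)\otimes 1$, $c(c+1)\otimes 1$. The factor $1\otimes\Lambda$ lies in $\F[a,b,c]\otimes U_0$ because $\Lambda$ is homogeneous of degree $0$, as noted in Section~\ref{section:Ugrad} (it follows from (\ref{eq:Lambdadef}) that $\Lambda\in U_0$). Each scalar factor $a(a+1)\otimes 1$, and likewise its $b$ and $c$ analogues, lies in $\F[a,b,c]\otimes U_0$ since $1\in U_0$. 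Because $\F[a,b,c]\otimes U_0$ is a subalgebra, any sum of products of these factors again lies in $\F[a,b,c]\otimes U_0$; hence $\alpha^\natural,\beta^\natural,\gamma^\natural,\delta^\natural$ are each homogeneous of degree $0$.

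There is no genuine obstacle here: the entire content is that the images in (\ref{e:alphnatural})--(\ref{e:delnatural}) were engineered to involve only the central scalars $a(a+1)\otimes 1$ and the degree-$0$ Casimir term $1\otimes\Lambda$, with no occurrence of the degree-shifting generators $1\otimes \nu_x$ or $1\otimes \nu_z$. The only point requiring confirmation is that $\Lambda$ carries degree $0$, which is immediate from its PBW expression. This stands in contrast to $A^\natural,B^\natural,C^\natural,D^\natural$ themselves, whose defining expressions (\ref{e:Anatural})--(\ref{e:Dnatural}) genuinely mix several homogeneous degrees and so require the more detailed component analysis carried out via Lemma~\ref{lemma:ABCnu} and the elements $R,L,\theta,\vartheta$.
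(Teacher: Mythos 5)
Your proposal is correct and follows essentially the same route as the paper: the paper's one-line proof cites Lemma \ref{lemma:FUbasis}, whose content is precisely that everything built from $1\otimes\Lambda$ and the scalars $a\otimes 1$, $b\otimes 1$, $c\otimes 1$ sits inside $\F[a,b,c]\otimes U_0$, which is the observation you make via the subalgebra property of the degree-$0$ component together with $\Lambda\in U_0$. The two justifications differ only in whether closure under multiplication is read off from the explicit basis or from the grading axiom $\mathcal A_0\cdot\mathcal A_0\subseteq\mathcal A_0$, so the arguments are substantively identical.
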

\begin{proof}
	The result follows from Lemma \ref{lemma:FUbasis}.
\end{proof}

\begin{lem}\label{lemma:imagedegree}
For each of the elements $A^\natural,B^\natural,C^\natural,D^\natural$, we display the $n$-homogeneous component for $-1\leq n \leq 1$.  All other homogeneous components of $A^\natural,B^\natural,C^\natural,D^\natural$ are zero.
\begin{table}[H]
\small
\centering
\extrarowheight=3pt
\begin{tabular}{c|ccc}
$v$&$\tilde\pi_{-1}(v)$\quad &$\tilde\pi_{0}(v)$& \quad $\tilde\pi_{1}(v)$\\
\hline
\hline
$A^\natural$ & $0$ & $\vartheta(\vartheta+1\otimes 1)$ & $R$\\
$B^\natural$ & $L$ & $\theta(\theta-1\otimes 1)$ & $0$\\
$C^\natural$ & $-L$ & $1\otimes\Lambda+a(a+1)\otimes 1+b(b+1)\otimes 1+c(c+1)\otimes 1-\vartheta(\vartheta+1\otimes 1)-\theta(\theta-1\otimes 1)$ & $-R$ \\
$D^\natural$ & $\vartheta L$ & $\frac{1}{2}[R,L]$ & $\theta R$\\
\end{tabular}
\end{table}
\end{lem}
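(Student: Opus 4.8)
The plan is to read the homogeneous components directly off the reformulations already established: handle $A^\natural$ and $B^\natural$ first from Lemma~\ref{lemma:ABCnu}, deduce $C^\natural$ by linearity of the projections, and finally extract $D^\natural$ from the commutator identity $2D^\natural=[A^\natural,B^\natural]$ in \eqref{e:natural[AB]=D}. First I would combine Lemma~\ref{lemma:ABCnu} with the definitions \eqref{eq:RLtheta1}--\eqref{eq:RLtheta4}. Since $\vartheta+1\otimes 1=1\otimes y+(a+1)\otimes 1$ and $\theta-1\otimes 1=1\otimes y-(b+1)\otimes 1$, the identities \eqref{eq:Anat} and \eqref{eq:Bnat} read
\begin{equation*}
A^\natural=\vartheta(\vartheta+1\otimes 1)+R,\qquad\qquad B^\natural=\theta(\theta-1\otimes 1)+L.
\end{equation*}
By Lemma~\ref{lemma:RLnonzero} the summands $\vartheta(\vartheta+1\otimes 1)$ and $\theta(\theta-1\otimes 1)$ are homogeneous of degree $0$, while $R$ has degree $1$ and $L$ has degree $-1$. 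Reading off $\tilde\pi_n$ (using that it is $\F$-linear, Definition~\ref{def:tproj}) gives the rows for $A^\natural$ and $B^\natural$ and shows that all their components outside degrees $-1,0,1$ vanish.

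For $C^\natural$ I would invoke \eqref{eq:deltanat}, so that $C^\natural=\delta^\natural-A^\natural-B^\natural$, together with Lemma~\ref{lemma:imagedegree0}, which asserts that $\delta^\natural$ is homogeneous of degree $0$. Applying $\tilde\pi_n$ and using linearity yields $\tilde\pi_1(C^\natural)=-R$, $\tilde\pi_{-1}(C^\natural)=-L$, and
\begin{equation*}
\tilde\pi_0(C^\natural)=\delta^\natural-\vartheta(\vartheta+1\otimes 1)-\theta(\theta-1\otimes 1),
\end{equation*}
which is the stated row after substituting the explicit expression for $\delta^\natural$ from \eqref{e:delnatural}.

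The last and most involved case is $D^\natural$, obtained from $2D^\natural=[A^\natural,B^\natural]$. Writing $P=\vartheta(\vartheta+1\otimes 1)$ and $Q=\theta(\theta-1\otimes 1)$, I would expand
\begin{equation*}
[A^\natural,B^\natural]=[P,Q]+[P,L]+[R,Q]+[R,L],
\end{equation*}
whose four summands are homogeneous of degrees $0,-1,1,0$ respectively; in particular no component of degree $\pm 2$ occurs, so $D^\natural$ is supported in degrees $-1,0,1$. Since $\theta$ and $\vartheta$ commute (Lemma~\ref{lemma:comm}), $[P,Q]=0$, whence $\tilde\pi_0(2D^\natural)=[R,L]$ and thus $\tilde\pi_0(D^\natural)=\tfrac12[R,L]$, an element of degree $0$ by Lemma~\ref{lemma:RLandLR}.

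The remaining work, which I expect to be the main obstacle, is to show $[P,L]=2\vartheta L$ and $[R,Q]=2\theta R$; both follow from the \emph{moreover} relations in Corollary~\ref{cor:RLthetas}. From $\vartheta L=L(\vartheta+1\otimes 1)$ one gets $L\vartheta=\vartheta L-L$, hence $[\vartheta^2,L]=\vartheta L+L\vartheta=2\vartheta L-L$, and together with $[\vartheta,L]=L$ this gives $[P,L]=[\vartheta^2,L]+[\vartheta,L]=2\vartheta L$. Symmetrically, from $\theta R=R(\theta-1\otimes 1)$ one gets $R\theta=\theta R+R$, hence $[R,\theta^2]=R\theta+\theta R=2\theta R+R$, and with $[R,\theta]=R$ this gives $[R,Q]=[R,\theta^2]-[R,\theta]=2\theta R$. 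Dividing by $2$ produces $\tilde\pi_{-1}(D^\natural)=\vartheta L$ and $\tilde\pi_1(D^\natural)=\theta R$, completing the table.
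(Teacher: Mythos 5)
Your proposal is correct and follows essentially the same route as the paper's proof: rows for $A^\natural$ and $B^\natural$ from Lemma~\ref{lemma:ABCnu} with \eqref{eq:RLtheta1}--\eqref{eq:RLtheta4}, the $C^\natural$ row from $C^\natural=\delta^\natural-A^\natural-B^\natural$ with Lemma~\ref{lemma:imagedegree0}, and the $D^\natural$ row from $D^\natural=\tfrac12[A^\natural,B^\natural]$ simplified via Corollary~\ref{cor:RLthetas} and Lemmas~\ref{lemma:RLandLR}, \ref{lemma:comm}. Your explicit computations $[P,L]=2\vartheta L$ and $[R,Q]=2\theta R$ simply carry out the simplification the paper leaves to the reader, and they are correct.
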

\begin{proof}
The first two lines of the table are readily obtained from Lemma \ref{lemma:ABCnu} using \eqref{eq:RLtheta1}--\eqref{eq:RLtheta4}.
To obtain the third line of the table, use the above results for $A^\natural$ and $B^\natural$ along with Lemma \ref{lemma:imagedegree0} and the fact that $C^\natural=\delta^\natural-A^\natural-B^\natural$. 
To obtain the fourth line of the table, use the above results for $A^\natural$ and $B^\natural$ along with the fact that $D^\natural=\frac{1}{2}[A^\natural,B^\natural]$.  Then use Corollary \ref{cor:RLthetas}, Lemma \ref{lemma:RLandLR}, and Lemma \ref{lemma:comm} to simplify the result. 
\end{proof}

\begin{lem}\label{lemma:ABCDhom}
	For $i\in\N$, each of the following hold:
	\begin{enumerate}
		\item The homogeneous component of $(A^\natural)^i$ of degree $n$ is zero unless $0\leq n\leq i$.  Moreover, the homogeneous component of degree 0 is $\vartheta^i(\vartheta+1\otimes 1)^i$ and the homogeneous component of degree $i$ is $R^i$.
		\item The homogeneous component of $(B^\natural)^i$ of degree $n$ is zero unless $-i\leq n\leq 0$.  Moreover, the homogeneous component of degree $-i$ is $L^i$ and the homogeneous component of degree 0 is $\theta^i(\theta-1\otimes 1)^i$.
		\item The homogeneous component of $(C^\natural)^i$ of degree $n$ is zero unless $-i\leq n\leq i$.  Moreover, the homogeneous component of degree $-i$ is $(-L)^i$ and the homogeneous component of degree i is $(-R)^i$.
		\item The homogeneous component of $(D^\natural)^i$ of degree $n$ is zero unless $-i\leq n\leq i$.  Moreover, the homogeneous component of degree $-i$ is $(\prod_{j=0}^{i-1}(\vartheta-j\otimes 1))L^i$ and the homogeneous component of degree i is $R^i\prod_{j=1}^i(\theta-j\otimes 1)$.
	\end{enumerate}
\end{lem}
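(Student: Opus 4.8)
The plan is to prove all four statements by induction on $i$, using Lemma \ref{lemma:imagedegree} as the base case $i=1$ and exploiting the grading-compatibility $\F[a,b,c]\otimes U_m \cdot \F[a,b,c]\otimes U_n \subseteq \F[a,b,c]\otimes U_{m+n}$ together with the commutation rules for $R,L,\theta,\vartheta$ collected in Corollary \ref{cor:RLthetas} and Lemmas \ref{lemma:RLandLR}, \ref{lemma:comm}. For each part, I would write $(X^\natural)^i=(X^\natural)^{i-1}X^\natural$, apply $\tilde\pi_n$, and use that $\tilde\pi_n$ distributes over such a product as $\tilde\pi_n\big((X^\natural)^{i-1}X^\natural\big)=\sum_{m}\tilde\pi_{m}\big((X^\natural)^{i-1}\big)\,\tilde\pi_{n-m}(X^\natural)$. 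Since by Lemma \ref{lemma:imagedegree} the only nonzero homogeneous components of each generator lie in degrees $-1,0,1$, the admissible ranges for $n$ follow immediately by induction, and the extreme-degree components pick up only the top-degree ($R$) or bottom-degree ($L$) piece at each step.

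First I would treat (i). The degree range $0\le n\le i$ follows by induction since $\tilde\pi_{-1}(A^\natural)=0$, so the lowest available generator degree is $0$. For the degree-$0$ component, only the product of the two degree-$0$ pieces contributes, giving $\vartheta^{i-1}(\vartheta+1\otimes 1)^{i-1}\cdot\vartheta(\vartheta+1\otimes 1)=\vartheta^i(\vartheta+1\otimes 1)^i$, where I use that $\vartheta$ commutes with itself. For the top component of degree $i$, only $R^{i-1}\cdot R=R^i$ contributes. Parts (ii) and (iii) are entirely parallel: for (ii) the ranges and the components $L^i$, $\theta^i(\theta-1\otimes 1)^i$ follow the same way using $\tilde\pi_1(B^\natural)=0$, and (iii) follows since $\tilde\pi_{\pm1}(C^\natural)=\mp R,\mp L$ give $(-R)^i$ and $(-L)^i$ at the extreme degrees.

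The main work is part (iv), since $\tilde\pi_{\pm1}(D^\natural)=\theta R,\vartheta L$ are genuinely products of a degree-$0$ and a degree-$\pm1$ factor, so the extreme components are not simply powers of a single element and I must track how $\theta$ and $\vartheta$ braid past $R$ and $L$. For the top degree $i$, the only contributing term is $\tilde\pi_{i-1}\big((D^\natural)^{i-1}\big)\cdot\tilde\pi_1(D^\natural)=\big(R^{i-1}\prod_{j=1}^{i-1}(\theta-j\otimes 1)\big)\cdot\theta R$. I then use $\prod_{j=1}^{i-1}(\theta-j\otimes 1)\cdot\theta\cdot R = R\prod_{j=1}^{i-1}(\theta-(j{+}1)\otimes 1)\cdot(\theta-1\otimes 1)$, obtained by applying the relation $\theta R=R(\theta-1\otimes 1)$ from Corollary \ref{cor:RLthetas} once to each factor to shift $R$ leftward through the product, which reindexes to $R^i\prod_{j=1}^{i}(\theta-j\otimes 1)$; the $\theta$-factors commute among themselves by Lemma \ref{lemma:comm}. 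The bottom-degree component of degree $-i$ is handled symmetrically using $\vartheta L=L(\vartheta+1\otimes 1)$ to move $L$ rightward, yielding $\prod_{j=0}^{i-1}(\vartheta-j\otimes 1)\,L^i$. The hard part is purely bookkeeping the index shifts in these two products, so I would verify the reindexing carefully for small $i$ and then record the inductive step cleanly; no genuinely new identity beyond Corollary \ref{cor:RLthetas} and Lemma \ref{lemma:comm} is required.
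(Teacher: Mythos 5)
Your proof is correct and follows essentially the same route as the paper: the paper's (very terse) proof likewise rests on the base-case components from Lemma \ref{lemma:imagedegree} together with the commutation relations $\theta R=R(\theta-1\otimes 1)$, $\vartheta L=L(\vartheta+1\otimes 1)$ from Corollary \ref{cor:RLthetas}, which is exactly the induction you carry out. Your index-shift computation for part (iv) is the right bookkeeping and matches the stated formulas.
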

\begin{proof}	
	The result follows from Lemma \ref{lemma:RLnonzero}, Corollary \ref{cor:RLthetas}, Lemma \ref{lemma:RLandLR}, 
	and Lemma \ref{lemma:imagedegree}. 
\end{proof}

\begin{lem}\label{lemma:ADBhom}
For $i,j,k\in \N$, the homogeneous component of
$(A^\natural)^i (D^\natural)^j (B^\natural)^k$ of degree $n$ is zero unless $-j-k\leq n\leq i+j$. Moreover, the homogeneous component of $(A^\natural)^i (D^\natural)^j (B^\natural)^k$ of degree $i+j$ is equal to
\begin{equation}
R^{i+j}\theta^k (\theta- 1\otimes 1)^k\prod_{\ell=1}^j(\theta-\ell\otimes 1)\label{eq:homcomp1}
\end{equation}
and
the homogenenous component of $(A^\natural)^i (D^\natural)^j (B^\natural)^k$ of degree $-j-k$ is equal to
\begin{equation}
\vartheta^i(\vartheta+ 1\otimes 1)^i
\left(\prod_{\ell=0}^{j-1} \left(\vartheta-\ell\otimes 1 \right)\right)
L^{j+k}.\label{eq:homcomp2}
\end{equation}
\end{lem}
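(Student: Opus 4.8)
The plan is to read off both extreme homogeneous components directly from Lemma \ref{lemma:ABCDhom}, using only the multiplicativity of the $\Z$-grading of $\FU$. Recall that in a $\Z$-graded algebra the product of homogeneous elements of degrees $p,q,r$ is homogeneous of degree $p+q+r$, so for every $n\in\Z$ one has
\begin{equation*}
\tilde\pi_n\bigl((A^\natural)^i(D^\natural)^j(B^\natural)^k\bigr)=\sum_{p+q+r=n}\tilde\pi_p\bigl((A^\natural)^i\bigr)\,\tilde\pi_q\bigl((D^\natural)^j\bigr)\,\tilde\pi_r\bigl((B^\natural)^k\bigr).
\end{equation*}
By Lemma \ref{lemma:ABCDhom} the only indices giving nonzero summands satisfy $0\le p\le i$, $-j\le q\le j$, and $-k\le r\le 0$. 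Adding these inequalities shows that the left-hand side vanishes unless $-j-k\le n\le i+j$, which settles the first assertion.

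For the top degree $n=i+j$, I would observe that $p+q+r=i+j$ together with $p\le i$, $q\le j$, $r\le 0$ forces $p=i$, $q=j$, $r=0$, since $i+j$ is precisely the sum of the maximal values. Hence exactly one summand survives, and Lemma \ref{lemma:ABCDhom} identifies it as
\begin{equation*}
\tilde\pi_i\bigl((A^\natural)^i\bigr)\,\tilde\pi_j\bigl((D^\natural)^j\bigr)\,\tilde\pi_0\bigl((B^\natural)^k\bigr)
=R^i\cdot R^j\prod_{\ell=1}^j(\theta-\ell\otimes 1)\cdot\theta^k(\theta-1\otimes 1)^k.
\end{equation*}
Combining the powers of $R$ gives $R^{i+j}$, and since $\prod_{\ell=1}^j(\theta-\ell\otimes 1)$ and $\theta^k(\theta-1\otimes 1)^k$ are polynomials in the single element $\theta$, they commute; reordering them produces exactly \eqref{eq:homcomp1}.

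Symmetrically, for the bottom degree $n=-j-k$ the equality $p+q+r=-j-k$ with $p\ge 0$, $q\ge -j$, $r\ge -k$ forces $p=0$, $q=-j$, $r=-k$, these being the minimal values, so again a single summand survives:
\begin{equation*}
\tilde\pi_0\bigl((A^\natural)^i\bigr)\,\tilde\pi_{-j}\bigl((D^\natural)^j\bigr)\,\tilde\pi_{-k}\bigl((B^\natural)^k\bigr)
=\vartheta^i(\vartheta+1\otimes 1)^i\Bigl(\prod_{\ell=0}^{j-1}(\vartheta-\ell\otimes 1)\Bigr)L^j\cdot L^k,
\end{equation*}
and merging $L^j\cdot L^k=L^{j+k}$ yields \eqref{eq:homcomp2}. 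The argument is essentially forced once Lemma \ref{lemma:ABCDhom} is available, so I do not anticipate a genuine obstacle; the only points needing care are the bookkeeping that isolates the unique extreme-degree summand and the elementary observation that the relevant factors in $\theta$ (respectively $\vartheta$) commute, which permits the final reordering into the stated normal form.
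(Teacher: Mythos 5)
Your proof is correct and takes essentially the same approach as the paper, whose entire proof of this lemma is the one-line citation of Lemma \ref{lemma:ABCDhom}; your write-up simply supplies the grading bookkeeping (multiplicativity of the $\Z$-grading, isolation of the unique extreme-degree summand, and the commutation of polynomials in $\theta$) that the paper leaves implicit.
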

\begin{proof}
The result follows from Lemma \ref{lemma:ABCDhom}. 
\end{proof}

\section{The images of $\Omega_A$, $\Omega_B$, $\Omega_C$ under $\natural$}\label{section:Casimages}
Recall the homomorphism $\natural:\Re\to\FU$ from Section \ref{section:embed}.  In Section \ref{section:FUgrad}, we described the images of the generators of the Racah algebra $\Re$ under the homomorphsim $\natural$ in terms of their homogeneous components.
In this section, we use those results to describe the images of the Casimir elements $\Omega_A,\Omega_B,\Omega_C$ under $\natural$.
 
\begin{thm}\label{thm:Casimage}
\begin{enumerate}
\item The image of $\Omega_A$ under $\natural$ is
\begin{align*}
&(1\otimes \Lambda+a(a+1)\otimes 1-b(b+1)\otimes 1-c(c+1)\otimes 1)\cdot (a(a+1)\otimes \Lambda-b(b+1)c(c+1)\otimes 1)
\\
&\qquad-(1\otimes \Lambda+a(a+1)\otimes 1)\cdot(b(b+1)\otimes 1+c(c+1)\otimes 1).
\end{align*}

\item The image of $\Omega_B$ under $\natural$ is
\begin{align*}
&(1\otimes \Lambda+b(b+1)\otimes 1-c(c+1)\otimes 1-a(a+1)\otimes 1)\cdot (b(b+1)\otimes \Lambda-c(c+1)a(a+1)\otimes 1)
\\
&\qquad-(1\otimes \Lambda+b(b+1)\otimes 1)\cdot(c(c+1)\otimes 1+a(a+1)\otimes 1).
\end{align*}

\item The image of $\Omega_C$ under $\natural$ is
\begin{align*}
&(1\otimes \Lambda+c(c+1)\otimes 1-a(a+1)\otimes 1-b(b+1)\otimes 1)\cdot (c(c+1)\otimes \Lambda-a(a+1)b(b+1)\otimes 1)
\\
&\qquad-(1\otimes \Lambda+c(c+1)\otimes 1)\cdot(a(a+1)\otimes 1+b(b+1)\otimes 1).
\end{align*}
\end{enumerate}
\end{thm}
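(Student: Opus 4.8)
The plan is to combine the homomorphism property of $\natural$ with the $\Z$-grading of $\FU$ introduced in Section~\ref{section:FUgrad}. Applying $\natural$ to the defining expression \eqref{eq:CasA} and using that $\natural$ is an $\F$-algebra homomorphism gives
\[
\natural(\Omega_A)=(D^\natural)^2+\tfrac{1}{2}\bigl(B^\natural A^\natural C^\natural+C^\natural A^\natural B^\natural\bigr)+(A^\natural)^2+B^\natural\gamma^\natural-C^\natural\beta^\natural-A^\natural\delta^\natural,
\]
so it remains to evaluate this element of $\FU$. The crucial observation is that the asserted answer lies in $\F[a,b,c]\otimes U_0$; in fact it is a polynomial in $1\otimes\Lambda$ with coefficients in $\F[a,b,c]$. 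I would therefore compute $\natural(\Omega_A)$ one homogeneous component at a time, showing that every component of nonzero degree vanishes while the degree-$0$ component equals the stated expression.

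For the components of nonzero degree I would substitute the homogeneous decompositions of $A^\natural,B^\natural,C^\natural,D^\natural$ recorded in Lemma~\ref{lemma:imagedegree}, expand each summand above, and collect the terms of each degree $n$ with $1\le|n|\le 2$. By Lemma~\ref{lemma:imagedegree} the extreme degrees $\pm2$ receive contributions only from $(D^\natural)^2$, $(A^\natural)^2$, and the two triple products $B^\natural A^\natural C^\natural$ and $C^\natural A^\natural B^\natural$ (the extremal components of the squares being recorded in Lemma~\ref{lemma:ABCDhom}); these should cancel immediately. The degree-$\pm1$ components require slightly more care, but each is a word of net degree $\pm1$ in $R,L,\theta,\vartheta$, and after using Corollary~\ref{cor:RLthetas} to move all occurrences of $\theta,\vartheta$ to one side I expect them to cancel as well. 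That the off-diagonal components vanish is consistent with the fact that the final answer, being a polynomial in the central element $1\otimes\Lambda$, is central in $\FU$.

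The degree-$0$ component is the main computation. Collecting the degree-$0$ part of each summand produces an expression built from $\theta$, $\vartheta$, the products $RL$ and $LR$, and the commutator $[R,L]$. Here I would apply Lemma~\ref{lemma:RLandLR} to rewrite $RL$, $LR$, and hence $[R,L]=RL-LR$, as explicit polynomials in $1\otimes y$ and $1\otimes\Lambda$ with coefficients in $\F[a,b,c]$. By Lemma~\ref{lemma:comm} the elements $\theta,\vartheta,RL,LR,[R,L]$ mutually commute, so once these substitutions are made the remaining work is a purely commutative polynomial manipulation in $1\otimes y$, $1\otimes\Lambda$, and the indeterminates $a,b,c$. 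The decisive check is that the dependence on $1\otimes y$ drops out entirely, leaving a polynomial in $1\otimes\Lambda$ alone; comparing this polynomial with the claimed formula establishes part (i). I expect this cancellation of the $1\otimes y$ terms, together with the sheer number of terms to be organized, to be the principal obstacle, and the grading is precisely what reduces it to a tractable commutative identity.

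Finally, parts (ii) and (iii) follow from (i) with no further computation. The cyclic substitution $a\mapsto b\mapsto c\mapsto a$ on $\F[a,b,c]$ together with $x\mapsto y\mapsto z\mapsto x$ on $\U$ determines an automorphism $\sigma$ of $\FU$, since the defining relations \eqref{e:eqrelU} are invariant under the cyclic rotation of $x,y,z$; likewise $A\mapsto B\mapsto C\mapsto A$ with $D\mapsto D$ determines an automorphism $\rho$ of $\Re$ satisfying $\rho(\Omega_A)=\Omega_B$ and $\rho(\Omega_B)=\Omega_C$. One checks on the generators that $\sigma\circ\natural=\natural\circ\rho$, so applying $\sigma$ to the formula of part (i) yields part (ii), and applying it once more yields part (iii).
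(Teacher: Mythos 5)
Your proof of part (i) is essentially the paper's own proof: the paper likewise applies $\natural$ to \eqref{eq:CasA}, splits each of the seven products $(D^\natural)^2$, $B^\natural A^\natural C^\natural$, $C^\natural A^\natural B^\natural$, $(A^\natural)^2$, $B^\natural\gamma^\natural$, $C^\natural\beta^\natural$, $A^\natural\delta^\natural$ into homogeneous components via Lemma \ref{lemma:imagedegree}, Corollary \ref{cor:RLthetas} and Lemma \ref{lemma:comm} (recording them in tables, in which the degree $\pm1$, $\pm2$ parts cancel exactly as you predict), and then evaluates the degree-$0$ part using Lemma \ref{lemma:RLandLR}. Where you genuinely depart from the paper is in parts (ii) and (iii): the paper simply declares those proofs ``similar,'' i.e.\ it repeats the whole computation, whereas you deduce them from (i) by the equivariance $\sigma\circ\natural=\natural\circ\rho$. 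Your symmetry argument is sound: the rotation $x\mapsto y\mapsto z\mapsto x$ preserves the relations \eqref{e:eqrelU} and fixes both $\Lambda$ and $w$ (by Lemma \ref{lemma:w}), so $\sigma$ is a well-defined automorphism of $\FU$; the rotation $A\mapsto B\mapsto C\mapsto A$, $D\mapsto D$ preserves the relations of Definition \ref{def:Re} and sends $\alpha\mapsto\beta\mapsto\gamma\mapsto\alpha$, $\delta\mapsto\delta$, hence $\Omega_A\mapsto\Omega_B\mapsto\Omega_C$ by \eqref{eq:CasA}--\eqref{eq:CasC}; and a direct check on \eqref{e:Anatural}--\eqref{e:Dnatural} gives $\sigma(A^\natural)=B^\natural$, $\sigma(B^\natural)=C^\natural$, $\sigma(C^\natural)=A^\natural$, $\sigma(D^\natural)=D^\natural$, so the two homomorphisms $\sigma\circ\natural$ and $\natural\circ\rho$ agree on the generators $A,B,C,D$ and therefore on all of $\Re$. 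Since $\sigma$ fixes $1\otimes\Lambda$ and cyclically permutes $a,b,c$, applying $\sigma$ to the formula of (i) yields (ii), and applying it again yields (iii). This is a real gain over the paper's route: two of the three lengthy component-by-component computations are replaced by a short equivariance check on four generators.
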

\begin{proof}
We first show (i).  By (\ref{eq:CasA}) and Theorem \ref{thm:natural}, the image of  $\Omega_A$ under $\natural$ is given by 
\begin{equation*}
\left(D^\natural\right)^2
+
\frac{B^\natural A^\natural C^\natural
+C^\natural A^\natural B^\natural}{2}
+ \left(A^\natural\right)^2
+B^\natural \gamma^\natural
-C^\natural \beta^\natural
-A^\natural \delta^\natural.
\label{eq:CasA2}
\end{equation*}
We begin by considering each of the products $\left(D^\natural\right)^2$, $B^\natural A^\natural C^\natural$, $C^\natural A^\natural B^\natural$, $\left(A^\natural\right)^2$, $B^\natural \gamma^\natural$, $C^\natural \beta^\natural$, $A^\natural \delta^\natural$ separately.   Due to the large number of terms in each of these products, we decompose each product into its homogeneous components.
All homogeneous components not displayed in the tables below are zero. 
For the sake of brevity, we write $\delta^\natural$ rather than the expression $\comm$ whenever possible. 

We begin with $(D^\natural)^2$.  Recall from Lemma \ref{lemma:imagedegree} that $D^\natural=\theta R+\frac{1}{2}[R,L]+\vartheta L$.  It follows from this fact along with  
Corollary \ref{cor:RLthetas} and Lemma \ref{lemma:comm}  that the nonzero homogeneous components of $(D^\natural)^2$ 
 are as follows: 
\begin{table}[H]
\small
\centering
\extrarowheight=3pt
\begin{tabular}{c|c}
$n$
&$\tilde{\pi}_{n}\left((D^\natural)^2\right)$ \\
\hline
\hline

$2$ &$\theta\left(\theta+1\otimes 1\right)R^2$
\\
$1$ & $\frac{1}{2}\theta R[R,L]+\frac{1}{2}\theta [R,L]R$\\
0 &$\frac{1}{4}[R,L]^2+\theta\left(\vartheta+1\otimes 1\right)RL+\vartheta\left(\theta-1\otimes 1\right)LR$
\\
$-1$ & 
$\frac{1}{2}\vartheta L[R,L]+\frac{1}{2}\vartheta [R,L]L$\\
$-2$ &$\vartheta\left(\vartheta-1\otimes 1\right)L^2$
\end{tabular}
\end{table}

We now consider the products $B^\natural A^\natural C^\natural$ and $C^\natural A^\natural B^\natural$. 
Using Corollary \ref{cor:RLthetas}, Lemma \ref{lemma:comm}, and Lemma \ref{lemma:imagedegree}, we find that  that the nonzero homogeneous components of $B^\natural A^\natural C^\natural$  
are as follows:
\begin{table}[H]
\small
\centering
\extrarowheight=3pt
\begin{tabular}{c|c}
$n$
& $\tilde{\pi}_{n}\left(B^\natural A^\natural C^\natural\right)$ \\
\hline
\hline
$2$ &$-\theta\left(\theta-1\otimes 1 \right)R^2$
\\
$1$ & $\theta(\theta-1\otimes 1 )\left(\delta^\natural-2(\vartheta+1\otimes 1 )^2-\theta(\theta+1\otimes 1 )\right)R-LR^2$
\\
0 &$\theta\left(\theta-1\otimes 1 \right)\vartheta\left(\vartheta+1\otimes 1 \right)\left(\delta^\natural-\vartheta(\vartheta+1\otimes 1 )-\theta(\theta-1\otimes 1 )\right)$ 
\\
& 
$-\theta\left(\theta-1\otimes 1 \right)RL+\left(\delta^\natural-2\vartheta^2-\theta\left(\theta-1\otimes 1 \right)\right)LR$
\\
$-1$ &$\big(\vartheta(\vartheta-1\otimes 1 )\left(\delta^\natural-\vartheta(\vartheta-1\otimes 1 )-(\theta-1\otimes 1 )(\theta-2\otimes 1 )\right)-\theta(\theta-1\otimes 1 )\vartheta(\vartheta+1\otimes 1 )\big)L-LRL$
\\
$-2$ &$-\vartheta\left(\vartheta-1\right)L^2$
\end{tabular}
\end{table}

It can similarly be shown that the nonzero homogeneous components of $C^\natural A^\natural B^\natural$  
 are as follows:
 \begin{table}[H]
\small
\centering
\extrarowheight=3pt
\begin{tabular}{c|c}
$n$
&$\tilde{\pi}_{n}\left(C^\natural A^\natural B^\natural\right)$ \\
\hline
\hline
$2$ &$-(\theta+1\otimes 1)\left(\theta+2\otimes 1\right)R^2$
\\
$1$ & $\theta(\theta+1\otimes 1)\left(\delta^\natural -2(\vartheta+1\otimes 1)^2-\theta(\theta-1\otimes 1)\right)R-R^2L$
\\
0 &$\theta\left(\theta-1\otimes 1\right)\vartheta\left(\vartheta+1\otimes 1\right)\left(\delta^\natural -\vartheta(\vartheta+1\otimes 1)-\theta(\theta-1\otimes 1)\right)$\qquad\qquad\qquad
\\
 &$+\left(\delta^\natural -2(\vartheta+1\otimes 1)^2-\theta(\theta-1\otimes 1)\right)RL-\theta(\theta-1\otimes 1)LR$
\\
$-1$ &$\big(\vartheta(\vartheta+1\otimes 1)\left(\delta^\natural-\vartheta(\vartheta+1\otimes 1)-\theta(\theta-1\otimes 1)\right)-\vartheta(\vartheta-1\otimes 1)(\theta-1\otimes 1)(\theta-2\otimes 1)\big)L-LRL$
\\
$-2$ &$-\vartheta\left(\vartheta-1\otimes 1\right)L^2$
\end{tabular}
\end{table}

We now consider the product $(A^\natural)^2$.  Recall from Lemma \ref{lemma:imagedegree} that $A^\natural=R+\vartheta(\vartheta+1\otimes 1)$.  It follows from this along with Corollary \ref{cor:RLthetas} that the nonzero homogeneous components of $(A^\natural)^2$  are as follows:
\begin{table}[H]
\small
\centering
\extrarowheight=3pt
\begin{tabular}{c|c}
$n$
&$\tilde{\pi}_{n}\left((A^\natural)^2\right)$ \\
\hline
\hline
$2$ &$R^2$
\\
$1$ & $2(\vartheta+1\otimes 1)^2R$
\\
0 &$\vartheta^2(\vartheta+1\otimes 1)^2$
\end{tabular}
\end{table}

We now consider the product $B^\natural \gamma^\natural$.  
Recall from Lemma \ref{lemma:imagedegree} that $B^\natural=L+\theta(\theta-1\otimes 1)$.
It follows from this fact along with Lemma \ref{lemma:imagedegree0}  
 that the nonzero homogeneous components of $B^\natural \gamma^\natural$  are as follows: 
 \begin{table}[H]
\small
\centering
\extrarowheight=3pt
\begin{tabular}{c|c}
$n$
&$\tilde{\pi}_{n}\left(B^\natural \gamma^\natural\right)$ \\
\hline
\hline
0 &$\theta(\theta-1\otimes 1)(1\otimes \Lambda- c(c+1)\otimes 1) (a(a+1)\otimes 1-b(b+1)\otimes 1)$
\\
$-1$ & $(1\otimes \Lambda- c(c+1)\otimes 1) (a(a+1)\otimes 1-b(b+1)\otimes 1)L$
\end{tabular}
\end{table}

We now consider the product $C^\natural \beta^\natural$.  It follows from Lemma \ref{lemma:imagedegree0}  
 and Lemma \ref{lemma:imagedegree}  
that the nonzero homogeneous components of $C^\natural \beta^\natural$ 
are as follows:
 \begin{table}[H]
\small
\centering
\extrarowheight=3pt
\begin{tabular}{c|c}
$n$
&$\tilde{\pi}_{n}\left(C^\natural \beta^\natural\right)$ \\
\hline
\hline
$1$ & $-(1\otimes \Lambda- b(b+1)\otimes 1) (c(c+1)\otimes 1-a(a+1)\otimes 1)R$
\\
0 &$(1\otimes \Lambda- b(b+1)\otimes 1) (c(c+1)\otimes 1-a(a+1)\otimes 1)\left(\delta^\natural-\vartheta(\vartheta+1\otimes 1)-\theta(\theta-1\otimes 1)\right)$
\\
$-1$ & $-(1\otimes \Lambda- b(b+1)\otimes 1)(c(c+1)\otimes 1-a(a+1)\otimes 1)L$
\end{tabular}
\end{table}

We now consider the product $A^\natural \delta^\natural$.  Recall from Lemma \ref{lemma:imagedegree} that 
$A^\natural=R+\vartheta(\vartheta+1\otimes 1)$.  It follows from this fact 
along with Lemma \ref{lemma:imagedegree0}   
that the nonzero homogeneous components of $A^\natural \delta^\natural$  are as follows:
\begin{table}[H]
\small
\centering
\extrarowheight=3pt
\begin{tabular}{c|c}
$n$
&$\tilde{\pi}_{n}\left(A^\natural \delta^\natural\right)$ \\
\hline
\hline
$1$ & $\delta^\natural R$
\\
0 &$\vartheta(\vartheta+1\otimes 1)\delta^\natural$
\end{tabular}
\end{table}

If we now use (\ref{eq:CasA}) and Lemma \ref{lemma:RLandLR} along with the information contained in the above tables, we see that the image of $\Omega_A$ under $\natural$ is as given in (i).

The proofs of (ii), (iii) are similar. 
\end{proof}

\section{The algebraic independence of $\theta$, $\alpha^\natural$, $\beta^\natural$, $\delta^\natural$, and a Casimir element}\label{section:algind}

Recall the homomorphism $\natural:\Re\to\FU$ from Section \ref{section:embed}.  In this section, we show that for any Casimir element $\Omega$ of $\Re$, the elements $\theta$ (or $\vartheta$), $\alpha^\natural$, $\beta^\natural$, $\delta^\natural$, $\Omega^\natural$ are algebraically independent over $\F$.  We use an approach similar to the one used in \cite[Section 8]{terwilligerUAW}.  

Let 
 $\{x_i\}_{i=1}^4$ 
denote mutually commuting indeterminates and let $\F[x_1,x_2,x_3,x_4]$ denote the $\F$-algebra consisting of the polynomials in $\{x_i\}_{i=1}^4$ that have all coefficients in $\F$.  
We define the following elements in $\F[x_1,x_2,x_3,x_4]$:
\begin{align}
y_1&=x_1^2x_2+x_1x_2^2-x_1x_2x_3-x_1x_2x_4-x_1x_3x_4-x_2x_3x_4+x_3^2x_4\label{eq:polys1}
\\ &\qquad +x_3x_4^2-x_1x_3-x_1x_4-x_2x_3-x_2x_4,\nonumber\\
y_2&=x_1x_3-x_1x_4-x_2x_3+x_2x_4,\\
y_3&=x_1x_4-x_1x_2-x_3x_4+x_2x_3,\\
y_4&=x_1+x_2+x_3+x_4.\label{eq:polys4}
\end{align}

\begin{lem}\label{lemma:algind1}
The elements $\{y_i\}_{i=1}^4$ in {\rm (\ref{eq:polys1})--(\ref{eq:polys4})} are algebraically independent over $\F$.
\end{lem}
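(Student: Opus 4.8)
The plan is to prove algebraic independence by a Jacobian computation. I will show that the Jacobian determinant
\[
J=\det\left(\frac{\partial y_i}{\partial x_j}\right)_{1\le i,j\le 4}
\]
is a nonzero element of $\F[x_1,x_2,x_3,x_4]$, and then deduce the statement from the minimal-relation argument. Suppose toward a contradiction that there is a nonzero $P\in\F[T_1,T_2,T_3,T_4]$ with $P(y_1,y_2,y_3,y_4)=0$, and choose such a $P$ of minimal total degree. Applying $\partial/\partial x_j$ to the identity $P(y_1,y_2,y_3,y_4)=0$ and using the chain rule gives, for each $j$,
\[
\sum_{i=1}^4 \left(\frac{\partial P}{\partial T_i}\right)(y_1,y_2,y_3,y_4)\,\frac{\partial y_i}{\partial x_j}=0 .
\]
In matrix form this says that the row vector with entries $(\partial P/\partial T_i)(y_1,\dots,y_4)$ is annihilated by the Jacobian matrix. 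Once $J\neq 0$, that matrix is invertible over the fraction field $\F(x_1,x_2,x_3,x_4)$, forcing $(\partial P/\partial T_i)(y_1,\dots,y_4)=0$ for $i=1,2,3,4$. Since each $\partial P/\partial T_i$ has total degree smaller than $\deg P$, minimality forces $\partial P/\partial T_i=0$ as a polynomial for every $i$.

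The heart of the argument is therefore to check $J\neq 0$, which I do by evaluating the Jacobian at a single convenient point rather than expanding the determinant symbolically. Differentiating (\ref{eq:polys1})--(\ref{eq:polys4}), the rows coming from $y_4,y_2,y_3$ are
\[
\left(1,1,1,1\right),\quad
\left(x_3-x_4,\,x_4-x_3,\,x_1-x_2,\,x_2-x_1\right),\quad
\left(x_4-x_2,\,x_3-x_1,\,x_2-x_4,\,x_1-x_3\right),
\]
while the row from $y_1$ is read off from the four partials of (\ref{eq:polys1}). Evaluating at $(x_1,x_2,x_3,x_4)=(2,0,1,-1)$ turns the Jacobian matrix into
\[
\begin{pmatrix}
1 & 1 & 1 & 1\\
2 & -2 & 2 & -2\\
-1 & -1 & 1 & 1\\
1 & 5 & -1 & -5
\end{pmatrix},
\]
whose determinant equals $2^6$. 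Since the only prime dividing $2^6$ is $2$ and ${\rm char\,}\F\neq 2$, this value is nonzero in $\F$; hence $J$ is a nonzero polynomial, and the system above does force all $(\partial P/\partial T_i)(y_1,\dots,y_4)$, and then all $\partial P/\partial T_i$, to vanish.

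It remains to derive a contradiction from $\partial P/\partial T_i=0$ for all $i$. When ${\rm char\,}\F=0$ this already forces $P$ to be a nonzero constant, contradicting $P(y_1,\dots,y_4)=0$. When ${\rm char\,}\F=p>0$ I first reduce to $\overline{\F}$: a nonzero relation over $\F$ is in particular a relation over $\overline{\F}$, so it suffices to rule out relations over the perfect field $\overline{\F}$, and the entire argument above runs verbatim over $\overline{\F}$ (the evaluation still gives $2^6$). The vanishing of all $\partial P/\partial T_i$ means $P\in\overline{\F}[T_1^p,T_2^p,T_3^p,T_4^p]$; since $\overline{\F}$ is perfect, one can extract $p$-th roots of the coefficients to write $P=R^p$ for some nonzero $R\in\overline{\F}[T_1,\dots,T_4]$ with $\deg R<\deg P$, and then $R(y_1,\dots,y_4)^p=P(y_1,\dots,y_4)=0$ gives $R(y_1,\dots,y_4)=0$, contradicting the minimality of $\deg P$. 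The main obstacle is precisely this positive-characteristic step: the naive Jacobian criterion can fail over imperfect fields, so the two things that must be handled with care are (i) the inseparability issue, resolved by passing to $\overline{\F}$, and (ii) the requirement that $J$ be nonzero in \emph{every} admissible characteristic — which is exactly why it is convenient that the chosen evaluation point produced a pure power of $2$, making the Jacobian invertible whenever ${\rm char\,}\F\neq 2$.
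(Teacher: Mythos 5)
Your proposal is correct, and it takes a genuinely different route from the paper's proof. The paper argues by a leading-monomial (term-order) computation: it assigns the monomial $x_1^hx_2^ix_3^jx_4^k$ the rank $5h+i+3j+2k$, shows that $y_1^ry_2^sy_3^ty_4^u$ has leading monomial $x_1^{2r+s+t+u}x_2^rx_3^sx_4^t$ with coefficient $1$, and observes that distinct $(r,s,t,u)$ yield distinct leading monomials; hence the products $y_1^ry_2^sy_3^ty_4^u$ are linearly independent over $\F$, which gives algebraic independence. You instead verify the Jacobian criterion. I checked your computations: the rows of partial derivatives are right, the evaluation at $(2,0,1,-1)$ gives the displayed matrix (your row order is a permutation of the natural one, which affects only the sign of $J$), and its determinant is indeed $64=2^6$, nonzero in $\F$ since ${\rm char\,}\F\neq 2$. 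Your treatment of positive characteristic is also the correct repair of the naive Jacobian criterion, which can fail when all $\partial P/\partial T_i$ vanish identically: passing to $\overline{\F}$, writing $P=R^p$ via perfectness, and contradicting minimality of $\deg P$ closes that gap. Comparing the two approaches: yours is conceptually standard and light on combinatorics, but it genuinely consumes the standing hypothesis ${\rm char\,}\F\neq 2$ (the evaluated determinant $2^6$ vanishes in characteristic $2$) and requires the detour through the algebraic closure; the paper's argument is completely elementary, characteristic-free, and yields slightly more information, namely explicit leading terms for all the monomials $y_1^ry_2^sy_3^ty_4^u$, in the same spirit as the graded/basis manipulations used elsewhere in the paper.
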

\begin{proof}
The following is a basis for the $\F$-vector space $\F[x_1,x_2,x_3,x_4]$:
\begin{equation}
x_1^hx_2^ix_3^jx_4^k\qquad\qquad h,i,j,k\in\N.\label{eq:polybasis}
\end{equation}
We call an element $x_1^hx_2^ix_3^jx_4^k$ in the basis (\ref{eq:polybasis}) a {\it monomial}.  We define the {\it rank} of this monomial to be $5h+i+3j+2k$.  For example, consider the monomials that comprise $y_2$.  The monomials $x_1x_3$, $x_1x_4$, $x_2x_3$, $x_2x_4$ have rank 8, 7, 4, 3 respectively.  

To prove the lemma, it suffices to show that the following elements are linearly independent over $\F$:
\begin{equation}
y_1^ry_2^sy_3^ty_4^u\qquad\qquad r,s,t,u\in\N.\label{eq:polybasis2}
\end{equation}
Given integers $r,s,t,u\in\N$, we write $y_1^ry_2^sy_3^ty_4^u$ as a linear combination of monomials:
\begin{align*}
y_1^ry_2^sy_3^ty_4^u&=\left(x_1^2x_2+x_1x_2^2-x_1x_2x_3-x_1x_2x_4-x_1x_3x_4-x_2x_3x_4+x_3^2x_4+x_3x_4^2\right. \\ &\qquad\left. -x_1x_3-x_1x_4-x_2x_3-x_2x_4\right)^r
 \left(x_1x_3-x_1x_4-x_2x_3+x_2x_4\right)^s \\
&\qquad \times \left(x_1x_4-x_1x_2-x_3x_4+x_2x_3\right)^t
\left(x_1+x_2+x_3+x_4\right)^u\\
&=\left(x_1^2x_2\right)^r\left(x_1x_3\right)^s\left(x_1x_4\right)^tx_1^u+\text{ sum of monomials that have lower rank}\\
&=x_1^{2r+s+t+u}x_2^rx_3^sx_4^t+\text{ sum of monomials that have lower rank.}
\end{align*}
We call the monomial $x_1^{2r+s+t+u}x_2^rx_3^sx_4^t$ the \textit{leading monomial} of $y_1^ry_2^sy_3^ty_4^u$ since it is the highest rank monomial with nonzero coefficient.  Given a monomial $x_1^hx_2^ix_3^jx_4^k$ in the basis (\ref{eq:polybasis}), consider the following system of linear equations in the unknowns $r,s,t,u$:
\begin{equation*}
2r+s+t+u=h,\qquad r=i,\qquad s=j,\qquad t=k.
\end{equation*}
This system has a unique solution:
\begin{equation*}
r=i,\qquad s=j,\qquad t=k,\qquad u=h-2i-j-k.
\end{equation*}
Therefore $x_1^hx_2^ix_3^jx_4^k$ is the leading monomial for exactly one element of (\ref{eq:polybasis2}).  By these comments, the elements (\ref{eq:polybasis2}) are linearly independent over $\F$.  The result follows.
\end{proof}

\begin{lem}\label{lemma:algind2}
For any Casimir element $\Omega$ of $\Re$, the following hold:
\begin{enumerate}
\item
 The elements $\theta$, $\Omega^\natural$, $\alpha^\natural$, $\beta^\natural$, $\delta^\natural$ 
 are algebraically independent over $\F$,
\item
 The elements $\vartheta$, $\Omega^\natural$, $\alpha^\natural$, $\beta^\natural$, $\delta^\natural$ 
 are algebraically independent over $\F$.
  \end{enumerate}
\end{lem}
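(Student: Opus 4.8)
The plan is to reduce the algebraic independence of the five elements $\theta$, $\Omega^\natural$, $\alpha^\natural$, $\beta^\natural$, $\delta^\natural$ (respectively $\vartheta$, $\ldots$) to the purely polynomial statement already proved in Lemma \ref{lemma:algind1}. The key observation is that although $\Omega^\natural$, $\alpha^\natural$, $\beta^\natural$, $\delta^\natural$ all live in the degree-$0$ component $\F[a,b,c]\otimes U_0$ and involve the transcendental $\Lambda$, they are built from $a(a+1)$, $b(b+1)$, $c(c+1)$ and $\Lambda$ in a way that mirrors the symmetric-type polynomials $y_1,\ldots,y_4$. So first I would introduce an $\F$-algebra homomorphism from $\F[x_1,x_2,x_3,x_4]$ into a suitable commutative subalgebra of $\FU$ by sending $x_1,x_2,x_3,x_4$ to the four commuting elements built from $\theta$ (or $\vartheta$) and the three central quantities; concretely I expect the right assignment to send the $x_i$ to expressions such as $\theta$ together with $a(a+1)\otimes 1$, $b(b+1)\otimes 1$, $c(c+1)\otimes 1$ and $1\otimes\Lambda$, chosen so that the images of $y_1,y_2,y_3,y_4$ become (up to harmless scalar/affine normalization) exactly $\Omega^\natural$, $\alpha^\natural$, $\beta^\natural$, $\delta^\natural$.

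Granting such a dictionary, the logical skeleton is short. If there were a nonzero polynomial $P\in\F[X_0,X_1,X_2,X_3,X_4]$ with $P(\theta,\Omega^\natural,\alpha^\natural,\beta^\natural,\delta^\natural)=0$, then pulling back along the homomorphism would produce a polynomial relation among the five commuting preimages in $\F[x_1,x_2,x_3,x_4]$. Since Lemma \ref{lemma:algind1} says $y_1,y_2,y_3,y_4$ are algebraically independent, and the remaining generator (the preimage of $\theta$) supplies a genuinely new transcendental — this is where I must verify it is not already algebraic over $\F[y_1,\ldots,y_4]$ — the relation forces $P\equiv 0$, a contradiction. The only structural input I need beyond Lemma \ref{lemma:algind1} is that the target commutative algebra is a genuine polynomial ring: here I would invoke Lemma \ref{lemma:FUbasis} and Corollary \ref{cor:extbasis}, which exhibit $a,b,c,\Lambda,y$ as yielding a free module basis, so that monomials in $\theta$ (equivalently $y=h/2$) and in $a(a+1),b(b+1),c(c+1),\Lambda$ are linearly independent over $\F$.

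I would arrange the argument so that the two cases (i) and (ii) run in parallel: the symmetry $\theta=1\otimes y-b\otimes 1$ versus $\vartheta=1\otimes y+a\otimes 1$ from \eqref{eq:RLtheta4} means both are affine shifts of $1\otimes y$ by a central element, so whichever of $\theta$, $\vartheta$ I use plays the same role as the independent fifth variable. Thus after proving (i) in detail, (ii) follows by the identical computation with $-b$ replaced by $+a$, and I would simply remark this rather than repeat it. One technical point to handle carefully is that $\Omega^\natural$ depends on the choice of Casimir element $\Omega$ only up to an element of $\Ce$, i.e.\ up to a polynomial in $\alpha^\natural,\beta^\natural,\delta^\natural$; since adding such a polynomial to the fifth coordinate is an invertible polynomial change of variables, algebraic independence is insensitive to this choice, so it suffices to verify the claim for the specific Casimir element $\Omega_A$ whose image was computed explicitly in Theorem \ref{thm:Casimage}.

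The main obstacle I anticipate is pinning down the precise polynomial dictionary that realizes $\Omega_A^\natural$ as the image of $y_1$. Whereas $\delta^\natural$ in \eqref{e:delnatural} and $\alpha^\natural,\beta^\natural$ in \eqref{e:alphnatural}--\eqref{e:betanatural} visibly match the low-degree $y_4,y_2,y_3$, the cubic $y_1$ has eleven terms and the image of $\Omega_A$ from Theorem \ref{thm:Casimage} is a sum of products of binomials in $\Lambda$ and the $a(a+1)$-type quantities; confirming that these agree under the substitution — and isolating exactly which four commuting elements to call $x_1,\ldots,x_4$ so that the match is exact — is the delicate bookkeeping step. I expect this to be a finite, mechanical verification once the substitution is guessed correctly, but guessing the correct substitution (so that leading-rank monomials line up as in the proof of Lemma \ref{lemma:algind1}) is the part that requires genuine care rather than routine expansion.
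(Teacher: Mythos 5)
Your proposal is correct and takes essentially the same route as the paper's own proof: reduce to $\Omega=\Omega_A$ (since any two Casimir elements differ by a polynomial in $\alpha,\beta,\delta$), use Lemma \ref{lemma:FUbasis} to get algebraic independence of $\theta$ (or $\vartheta$), $1\otimes\Lambda$, $a(a+1)\otimes 1$, $b(b+1)\otimes 1$, $c(c+1)\otimes 1$, and then invoke Lemma \ref{lemma:algind1} through a substitution. The dictionary you were trying to pin down is exactly $x_1\mapsto 1\otimes\Lambda$, $x_2\mapsto a(a+1)\otimes 1$, $x_3\mapsto b(b+1)\otimes 1$, $x_4\mapsto c(c+1)\otimes 1$, with $\theta$ serving as the extra fifth variable; under it $y_2,y_3,y_4$ map to $\alpha^\natural,\beta^\natural,\delta^\natural$ and $y_1$ maps precisely to the expression for $\Omega_A^\natural$ in Theorem \ref{thm:Casimage}(i), so the mechanical verification you deferred does go through.
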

\begin{proof}
 We first show (i).  Recall from Section \ref{section:racah} that $\Omega-\Omega_A$ can be expressed as a polynomial in $\alpha,\beta,\delta$.  Thus, it suffices to show that $\theta$, $\Omega_A^\natural$, $\alpha^\natural$, $\beta^\natural$, $\delta^\natural$ are algebraically independent over $\F$.

By Lemma \ref{lemma:FUbasis}, the following are algebraically independent over $\F$:
\begin{equation*}
1\otimes y,\qquad 1\otimes\Lambda, \qquad a\otimes 1, \qquad b\otimes 1,\qquad c\otimes 1.
\end{equation*}
Therefore the following are algebraically independent over $\F$:
\begin{equation*}
1\otimes y- b\otimes 1,\qquad 1\otimes\Lambda, \qquad a\otimes 1, \qquad b\otimes 1,\qquad c\otimes 1.
\end{equation*}
Consequently the following are algebraically independent over $\F$:
\begin{equation}
1\otimes y- b\otimes 1,\qquad 1\otimes\Lambda, \qquad a(a+1)\otimes 1, \qquad b(b+1)\otimes 1,\qquad c(c+1)\otimes 1.\label{eq:ind}
\end{equation}
Abbreviate the sequence (\ref{eq:ind}) by $X_0,X_1,X_2,X_3,X_4$.  Let 
\begin{align*}
Y_1&=X_1^2X_2+X_1X_2^2-X_1X_2X_3-X_1X_2X_4-X_1X_3X_4-X_2X_3X_4+X_3^2X_4\\ &\qquad +X_3X_4^2-X_1X_3-X_1X_4-X_2X_3-X_2X_4.
\end{align*}
By Lemma \ref{lemma:algind1}, the following are algebraically independent over $\F$:
\begin{gather*}
X_0,\qquad Y_1, \qquad
X_1X_3-X_1X_4-X_2X_3+X_2X_4,\\
X_1X_4-X_1X_2-X_3X_4+X_2X_3,\qquad
X_1+X_2+X_3+X_4.
\end{gather*}
The above five elements are
\begin{equation}
\theta,\qquad \Omega_A^\natural,\qquad \alpha^\natural,\qquad\beta^\natural,\qquad\delta^\natural,\label{eq:algind2-A}
\end{equation}
respectively.  Thus, the elements $\theta$,  $\Omega_A^\natural$,  $\alpha^\natural$, $\beta^\natural$, $\delta^\natural$ are algebraically independent over $\F$. The result follows.

Part (ii) can also be similarly shown.
\end{proof}

\section{The injectivity of $\natural$}\label{section:injectivity}
Recall the homomorphism $\natural:\Re\to\FU$ from Section \ref{section:embed}.  In this section, we state and prove the third main result of the paper, which states that $\natural$ is injective and thus provides an embedding of $\Re$ into $\FU$.

Before doing so, we mention that the $\F$-algebra $\FU$ contains no zero divisors.  This follows from the fact that neither $\F[a,b,c]$ nor $\U$ contains any zero divisors \cite[Corollary 9.8]{carter}.  We will use this fact in the proof of Theorem \ref{thm:inj}.

\begin{thm}\label{thm:inj}
The homomorphism $\natural$ given in Theorem \ref{thm:natural} is injective.
\end{thm}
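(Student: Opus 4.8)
The plan is to exploit the $\Z$-grading on $\FU$ that was developed in Sections~\ref{section:Ugrad}--\ref{section:FUgrad}, together with the explicit basis $(\ref{eq:basisURA2})$ for $\Re$ and the homogeneous-component formulas of Lemma~\ref{lemma:ADBhom}. Concretely, suppose $u\in\Re$ satisfies $u^\natural=0$; I want to show $u=0$. Write $u$ in the basis $(\ref{eq:basisURA2})$, so that
\begin{equation*}
u=\sum_{\substack{i,k,\ell,r,s,t\in\N,\\ j\in\{0,1\}}}\langle u,A^iD^jB^k\Omega^\ell\alpha^r\delta^s\beta^t\rangle\, A^iD^jB^k\Omega^\ell\alpha^r\delta^s\beta^t,
\end{equation*}
where I fix $\Omega=\Omega_A$ (any Casimir element works). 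Applying $\natural$ and recalling that $\Omega^\natural,\alpha^\natural,\beta^\natural,\delta^\natural$ are all homogeneous of degree $0$ (Lemma~\ref{lemma:imagedegree0} and Theorem~\ref{thm:Casimage}), the only source of grading is the factor $(A^\natural)^i(D^\natural)^j(B^\natural)^k$, which by Lemma~\ref{lemma:ADBhom} occupies degrees $-j-k$ through $i+j$.

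The key idea is to isolate the \emph{top-degree} homogeneous component. For each monomial indexed by $(i,j,k,\ell,r,s,t)$, the contribution to $u^\natural$ has top degree $i+j$, with top homogeneous component (by Lemma~\ref{lemma:ADBhom}) equal to
\begin{equation*}
R^{i+j}\,\theta^k(\theta-1\otimes1)^k\prod_{\ell'=1}^{j}(\theta-\ell'\otimes1)\cdot(\Omega^\natural)^\ell(\alpha^\natural)^r(\delta^\natural)^s(\beta^\natural)^t.
\end{equation*}
Since $R$ is homogeneous of degree $1$ and nonzero (Lemma~\ref{lemma:RLnonzero}), and $\FU$ has no zero divisors, the power $R^{i+j}$ is a nonzero degree-$(i+j)$ element. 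I would argue that distinct monomials produce linearly independent top-degree components by a double induction: first on the top degree $i+j$, peeling off the highest graded piece of $u^\natural=0$; then, within a fixed degree, using the algebraic independence of $\theta,\Omega^\natural,\alpha^\natural,\beta^\natural,\delta^\natural$ established in Lemma~\ref{lemma:algind2}(i). The point is that after cancelling the common left factor $R^{i+j}$ (legitimate since $\FU$ is a domain and, by Corollary~\ref{cor:extbasis}, $\F[a,b,c]\otimes U_{i+j}$ is free over $\F[a,b,c]$ with basis the images of $R^{i+j}$ times degree-$0$ elements), one is left with a polynomial identity in the five algebraically independent quantities, forcing every coefficient $\langle u,\cdots\rangle$ with that top degree to vanish.

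More carefully, I would organize the induction as follows. Let $N$ be the largest value of $i+j$ for which some coefficient in $u$ is nonzero; the degree-$N$ component of $u^\natural$ must itself vanish. That component is $R^{N}$ times an $\F[a,b,c]$-combination (via $\theta$ and the central images) of degree-$0$ elements, and by Corollary~\ref{cor:extbasis} together with Lemma~\ref{lemma:algind2} this combination must be zero, hence all involved coefficients vanish—contradicting maximality of $N$ unless $u=0$. The subtlety is that several monomials can share the same top degree $i+j=N$ (for instance, varying $i$ and $j\in\{0,1\}$ against $k$), so I must check that the map sending $(i,j,k,\ell,r,s,t)$ to its top component $R^{N}\theta^{k}(\theta-1)^{k}\prod(\theta-\ell')(\Omega^\natural)^\ell(\alpha^\natural)^r(\delta^\natural)^s(\beta^\natural)^t$ is injective into a basis. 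The exponent of $\theta$ is $2k+j$ and the power $\ell,r,s,t$ are read off directly, so recovering $(k,\ell,r,s,t)$ and $j$ (hence $i=N-j$) from the top component is a bookkeeping matter, handled by the leading-monomial argument already used in Lemma~\ref{lemma:algind1}.

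I expect the \textbf{main obstacle} to be the bookkeeping in the top-degree analysis: because $j$ ranges only over $\{0,1\}$ while $i$ and $k$ range over all of $\N$, several basis monomials collapse to the same overall degree $N=i+j$, and the polynomial in $\theta$ appearing in the top component mixes the contribution of $k$ (through $\theta^k(\theta-1)^k$, degree $2k$ in $\theta$) with that of $j$ (through $\prod_{\ell'=1}^j(\theta-\ell')$, degree $j$). Disentangling these to conclude that the top components of distinct monomials are genuinely distinct—so that vanishing of their $\F$-linear combination forces each coefficient to zero via Lemma~\ref{lemma:algind2}—is where the argument requires real care rather than routine computation. Once this injectivity-of-leading-terms is secured, the descending induction on $N$ finishes the proof that $u=0$, establishing injectivity of $\natural$.
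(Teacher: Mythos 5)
Your proposal is correct and takes essentially the same route as the paper: expand a kernel element in the basis (\ref{eq:basisURA2}), use the $\Z$-grading and Lemma \ref{lemma:ADBhom} to isolate an extreme homogeneous component, cancel the factor $R^N$ using that $\FU$ has no zero divisors, and derive a contradiction with the algebraic independence established in Lemma \ref{lemma:algind2}. The only difference is that the paper runs two cases --- projecting onto degree $N=\max(i+j)$ or onto degree $-M=-\max(j+k)$ according to whether $M\le N$ or $N\le M$, which lets it enumerate the possible $(i,j,k)$ explicitly and phrase the linear independence as a ``both bases'' statement --- whereas you project onto the top height alone and disentangle $j$ from $k$ via the parity of the $\theta$-degree $2k+j$; this collapses the paper's case analysis into a single case and is a legitimate streamlining.
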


\begin{proof}
Let $X$ denote an element in the kernel of $\natural$.  We show that $X=0$.  Let $\Omega\in\{\Omega_A,\Omega_B,\Omega_C\}$ and recall the basis of $\Re$ given in \eqref{eq:basisURA2}. Let $S=S(X)$ denote the 7-tuples $(i,j,k,\ell,r,s,t)$ of nonnegative integers such that $j\in\{0,1\}$ and
\begin{equation*}
\langle X, A^i D^j B^k \Omega^\ell \alpha^r \beta^s \gamma^t\rangle\neq 0.
\end{equation*}
By construction,
\begin{equation}
X=\sum_{(i,j,k,\ell,r,s,t)\in S} \langle X, A^i D^j B^k \Omega^\ell \alpha^r \beta^s \gamma^t\rangle A^i D^j B^k \Omega^\ell \alpha^r \beta^s \gamma^t.\label{eq:X}
\end{equation}
Applying the homomorphism $\natural$ to each side of (\ref{eq:X}) we see that
\begin{equation}
0=\sum_{(i,j,k,\ell,r,s,t)\in S} \langle X, A^i D^j B^k \Omega^\ell \alpha^r \beta^s \gamma^t\rangle \left(A^\natural\right)^i \left(D^\natural\right)^j \left(B^\natural\right)^k \left(\Omega^\natural\right)^\ell \left(\alpha^\natural\right)^r \left(\beta^\natural\right)^s \left(\delta^\natural\right)^t. \label{eq:Xnat}
\end{equation}
For a 7-tuple $(i,j,k,\ell,r,s,t)\in S$, we define its {\it height} to be $i+j$ and its {\it depth} to be $j+k$.  We let $S_n^+$ denote the set of elements of $S$ which have height $n$ and let $S_n^-$ denote the set of elements of $S$ which have depth $n$. 

Assume that $X\neq 0$ so that $S$ is nonempty.  Since $\{S_n^+\}_{n=0}^\infty$ (resp. $\{S_n^-\}_{n=0}^\infty$) is a partition of $S$,
$\{S_n^+\}_{n=0}^\infty$ (resp. $\{S_n^-\}_{n=0}^\infty$) are not all empty.
By construction, $S$ is finite, so only finitely many of $\{S_n^+\}_{n=0}^\infty$ (resp. $\{S_n^-\}_{n=0}^\infty$) are nonempty.
Let $N=\max\{n|S_n^+\neq 0\}$ and $M=\max\{n|S_n^-\neq 0\}$.  By construction both $S_N^+$ and $S_M^-$ are nonempty.
  We now split the argument into two cases.

Case $N\leq M$:
Recall the projection map $\tilde{\pi}_{-M}$ from Definition \ref{def:tproj}.  Apply $\tilde{\pi}_{-M}$ to each side of (\ref{eq:Xnat}).  Let $(i,j,k,\ell,r,s,t)\in S$ and consider the corresponding summand in (\ref{eq:Xnat}).  The image of this summand under $\tilde{\pi}_{-M}$ can be evaluated using  Lemma \ref{lemma:ADBhom}.  Observe that the image is nonzero only when $(i,j,k,\ell,r,s,t)\in S_M^-$.  Using \eqref{eq:Xnat} and Lemma \ref{lemma:ADBhom}, we find that
\begin{equation*}
0=\sum_{(i,j,k,\ell,r,s,t)\in S_M^-} \langle X, A^i D^j B^k \Omega^\ell \alpha^r \beta^s \gamma^t\rangle
\vartheta^i(\vartheta+ 1\otimes 1)^i
\left(\prod_{m=0}^{j-1} (\vartheta-m\otimes 1)\right)
\left(\Omega^\natural\right)^\ell \left(\alpha^\natural\right)^r \left(\beta^\natural\right)^s \left(\delta^\natural\right)^t L^M.
\end{equation*}
By Lemma \ref{lemma:RLnonzero}, $L\neq 0$.  Recall that $\F[a,b,c]\otimes \U$ contains no zero divisors.  Therefore
\begin{equation}
0=\sum_{(i,j,k,\ell,r,s,t)\in S_M^-} \langle X, A^i D^j B^k \Omega^\ell \alpha^r \beta^s \gamma^t\rangle
\vartheta^i(\vartheta+ 1\otimes 1)^i
\left(\prod_{m=0}^{j-1} (\vartheta-m\otimes 1)\right)
\left(\Omega^\natural\right)^\ell \left(\alpha^\natural\right)^r \left(\beta^\natural\right)^s \left(\delta^\natural\right)^t.\label{eq:image}
\end{equation}
Consider the equation \eqref{eq:image} above.  Recall that $S_M^-$ is nonempty and that by construction,\\ $\langle X, A^i D^j B^k \Omega^\ell \alpha^r \beta^s \gamma^t\rangle \neq 0$ for all $(i,j,k,\ell,r,s,t)\in S_M^-$.
Consider $(i,j,k,\ell,r,s,t)\in S_M^-$.  Recall that $j\in\{0,1\}$, $i+j\leq N$, and $j+k=M$.  By assumption, $N\leq M$.  Therefore $i+j\leq M$.  For these constraints on $i,j,k$, the possible solutions $(i,j,k)$ are
\begin{equation*}
(0,0,M),(1,0,M),\mathellipsis,(M,0,M),(0,1,M-1),(1,1,M-1),\mathellipsis,(M-1,1,M-1).
\end{equation*}
For the above values of $(i,j,k)$, the corresponding values of $\vartheta^i(\vartheta+ 1\otimes 1)^i
\prod_{m=0}^{j-1} \left(\vartheta-m\otimes 1\right)$ are
\begin{gather*}
 1\otimes 1,\quad \vartheta(\vartheta+ 1\otimes 1),\quad \vartheta^2(\vartheta+ 1\otimes 1)^2,\mathellipsis,\  \vartheta^M(\vartheta+ 1\otimes 1)^M,\\
\vartheta,\quad \vartheta^2(\vartheta+ 1\otimes 1),\quad \vartheta^3(\vartheta+ 1\otimes 1)^2,\mathellipsis, \ \vartheta^M(\vartheta+ 1\otimes 1)^{M-1}.
\end{gather*}
The above list and the sequence $\{\vartheta^n\}_{n=0}^{2M}$ are both bases for the $\F$-vector space consisting of polynomials in $\vartheta$ of degree at most $2M$.
With these comments in mind, (\ref{eq:image}) gives a nontrivial $\F$-linear dependency among
\begin{equation*}
\vartheta^n \left(\Omega^\natural\right)^\ell \left(\alpha^\natural\right)^r \left(\beta^\natural\right)^s \left(\delta^\natural\right)^t \qquad\qquad n,\ell,r,s,t\in\N, \qquad n\leq 2M.
\end{equation*}
The above linear dependency contradicts Lemma \ref{lemma:algind2}.

Case $M\leq N$: 
 Apply $\tilde{\pi}_{N}$ to each side of (\ref{eq:Xnat}).  Let $(i,j,k,\ell,r,s,t)\in S$ and consider the corresponding summand in (\ref{eq:Xnat}).  The image of this summand under $\tilde{\pi}_{N}$ can be evaluated using  Lemma \ref{lemma:ADBhom}.  Observe that the image is nonzero only when $(i,j,k,\ell,r,s,t)\in S_N^+$.  Using \eqref{eq:Xnat} and Lemma \ref{lemma:ADBhom}, we find that
\begin{equation*}
0=R^{N}\sum_{(i,j,k,\ell,r,s,t)\in S_N^+} \langle X, A^i D^j B^k \Omega^\ell \alpha^r \beta^s \gamma^t\rangle
\theta^k (\theta- 1\otimes 1)^k\left(\prod_{m=1}^j(\theta-m\otimes 1)\right)
 \left(\Omega^\natural\right)^\ell \left(\alpha^\natural\right)^r \left(\beta^\natural\right)^s \left(\delta^\natural\right)^t.
\end{equation*}
 By Lemma \ref{lemma:RLnonzero}, $R\neq 0$.  Recall that $\F[a,b,c]\otimes \U$ contains no zero divisors.    It then follows that
\begin{equation}
0=\sum_{(i,j,k,\ell,r,s,t)\in S_N^+} \langle X, A^i D^j B^k \Omega^\ell \alpha^r \beta^s \gamma^t\rangle
\theta^k (\theta- 1\otimes 1)^k\left(\prod_{m=1}^j(\theta-m\otimes 1)\right)
 \left(\Omega^\natural\right)^\ell \left(\alpha^\natural\right)^r \left(\beta^\natural\right)^s \left(\delta^\natural\right)^t.\label{eq:image2}
\end{equation}
Consider the equation \eqref{eq:image2} above.  Recall that $S_N^+$ is nonempty and that by construction,\\ $\langle X, A^i D^j B^k \Omega^\ell \alpha^r \beta^s \gamma^t\rangle \neq 0$ for all $(i,j,k,\ell,r,s,t)\in S_N^+$.
Consider $(i,j,k,\ell,r,s,t)\in S_N^+$.  Recall that $j\in\{0,1\}$, $i+j=N$, and $j+k\leq M$.  By assumption, $M\leq N$.  Therefore $j+k\leq N$.  For these constraints on $i,j,k$, the possible solutions $(i,j,k)$ are
\begin{equation*}
(N,0,0),(N,0,1),\mathellipsis,(N,0,N),(N-1,1,0),(N-1,1,1),\mathellipsis,(N-1,1,N-1).
\end{equation*}
For the above values of $(i,j,k)$ the corresponding values of $\theta^k (\theta- 1\otimes 1)^k\prod_{m=1}^j(\theta-m\otimes 1)$ are
\begin{gather*}
 1\otimes 1,\quad\theta(\theta+ 1\otimes 1),\quad\theta^2(\theta+ 1\otimes 1)^2,\mathellipsis, \ \theta^N(\theta+ 1\otimes 1)^N,\\
\theta- 1\otimes 1,\quad\theta(\theta- 1\otimes 1)^2,\quad\theta^2(\theta- 1\otimes 1)^3,\mathellipsis, \ \theta^{N-1}(\theta- 1\otimes 1)^{N}.
\end{gather*}
The above list and the sequence $\{\theta^n\}_{n=0}^{2N}$ are both bases for the $\F$-vector space consisting of polynomials in $\theta$ of degree at most $2N$.
With these comments in mind, (\ref{eq:image2}) gives a nontrivial $\F$-linear dependency among
\begin{equation*}
\theta^n \left(\Omega^\natural\right)^\ell \left(\alpha^\natural\right)^r \left(\beta^\natural\right)^s \left(\delta^\natural\right)^t \qquad\qquad n,\ell,r,s,t\in\N, \qquad n\leq 2N.
\end{equation*}
The above linear dependency contradicts Lemma \ref{lemma:algind2}.

In each case we reach a contradiction under the assumption that $X\neq 0$.  Therefore $X=0$ and hence $\natural$ is injective.
\end{proof}

We conclude the paper with a quick application of Theorem \ref{thm:inj}. 

\begin{lem}
The $\F$-algebra $\Re$ contains no zero divisors.
\end{lem}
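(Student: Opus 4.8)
The plan is to deduce the statement immediately from the embedding established in Theorem~\ref{thm:inj}, exploiting the elementary principle that a subalgebra of a domain is itself a domain. Recall from the remarks preceding Theorem~\ref{thm:inj} that $\FU$ contains no zero divisors, since neither $\F[a,b,c]$ nor $\U$ does \cite[Corollary 9.8]{carter}.

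I would argue by contradiction. Suppose there exist nonzero elements $u,v\in\Re$ with $uv=0$. Applying the algebra homomorphism $\natural$ to this relation gives $\natural(u)\,\natural(v)=\natural(uv)=\natural(0)=0$. By Theorem~\ref{thm:inj} the map $\natural$ is injective, so since $u$ and $v$ are nonzero, their images $\natural(u)$ and $\natural(v)$ are nonzero elements of $\FU$ whose product vanishes. This contradicts the fact that $\FU$ has no zero divisors, and hence no such $u,v$ can exist.

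There is no genuine obstacle at this stage: all of the substantive work has already been carried out in proving that $\natural$ is injective, and the only auxiliary fact needed --- that $\FU$ is a domain --- is exactly the observation recorded just before Theorem~\ref{thm:inj}. Consequently the result follows in a few lines, and I would present it as a short corollary of Theorem~\ref{thm:inj} rather than as an independent argument.
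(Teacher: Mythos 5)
Your proposal is correct and is exactly the paper's argument: the paper deduces the lemma from Theorem~\ref{thm:inj} together with the fact that $\FU$ has no zero divisors, which is precisely the contradiction argument you spell out. No differences to report.
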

\begin{proof}
The result follows from Theorem \ref{thm:inj} and the fact that $\FU$ contains no zero divisors. 
\end{proof}

\bibliographystyle{amsplain}
\bibliography{master}

\end{document}